\renewcommand{\to}{\longrightarrow}
\def\wh{\widehat}
\def\inv{^{-1}}
\def\p{\varphi}
\def\pinv{{\p \inv}}
\def\e<{\leq _{E}}
\def\ov#1{\ensuremath{\overline {#1}}}
\def\til#1{\ensuremath{\widetilde {#1}}}
\def\malce{\mathbin{\hbox{$\bigcirc$\rlap{\kern-8.3pt\raise0,50pt\hbox{$\mathtt{m}$}}}}}
\def\CC{\mathbb C}
\def\1sk{^{(1)}}
\def\gpd{G\ltimes \wh E}
\def\Thmname{Theorem}
\def\Propname{Proposition}
\def\Lemmaname{Lemma}
\def\Definitionname{Definition}
\newtheorem{Thm}{\Thmname}[section]
\newtheorem{Prop}[Thm]{\Propname}
\newtheorem{Lemma}[Thm]{\Lemmaname}
{\theoremstyle{definition}
\newtheorem{Def}[Thm]{\Definitionname}}
{\theoremstyle{remark}
}
\newtheorem{Cor}[Thm]{Corollary}
\theoremstyle{remark}
\newtheorem{claim}{Claim}}
\numberwithin{equation}{section}
\title{On inverse semigroup $C^*$-algebras and crossed products}
\author{David Milan}
\address{Department of Mathematics\\
The University of Texas at Tyler\\
3900 University Blvd.\\
Tyler, Tx 75799}
\email{dmilan@uttyler.edu}
\author{Benjamin Steinberg}
\address{School of Mathematics and Statistics\\
Carleton University \\
1125 Colonel By Drive\\
Ottawa, Ontario  K1S 5B6 \\
Canada}
\thanks{The second author was supported in part by NSERC}
\email{bsteinbg@math.carleton.ca}
\date{\today}
\keywords{}
\begin{document}
\begin{abstract}
We describe the $C^*$-algebra of an $E$-unitary or strongly $0$-$E$-unitary inverse semigroup as the partial crossed product of a commutative $C^*$-algebra by the maximal group image of the inverse semigroup. We give a similar result for the $C^*$-algebra of the tight groupoid of an inverse semigroup. We also study conditions on a groupoid $C^*$-algebra to be Morita equivalent to a full crossed product of a commutative $C^*$-algebra with an inverse semigroup, generalizing results of Khoshkam and Skandalis for crossed products with groups.
\end{abstract}
\maketitle

\section{Introduction}
$E$-unitary inverse semigroups are perhaps the most commonly studied class of inverse semigroups. One reason for this is that many interesting semigroups, including those studied in operator theory, are either $E$-unitary or strongly $0$-$E$-unitary. Another reason is that there is a very explicit structure theorem for this class known as McAlister's $P$-Theorem. It describes $E$-unitary inverse semigroups in terms of a group acting partially on a semilattice. It is natural to try to interpret this result at the level of $C^*$-algebras using crossed products, and indeed many authors have given descriptions of inverse semigroup algebras that are suggestive of this approach (c.f.~\cite{Skandalis,Nica,ExelLacaQuigg}), but none have been applicable in the same generality as the $P$-Theorem.

For example, in~\cite{Skandalis}, Khoshkam and Skandalis studied the $C^*$-algebras of certain locally compact groupoids admitting cocyles. They showed the algebras of such groupoids are Morita equivalent to crossed products. As an application to inverse semigroups, they show that the $C^*$-algebras of a restricted class of $E$-unitary inverse semigroups are Morita equivalent to crossed products of the maximal group image by a commutative $C^*$-algebra related to the idempotents. They also point out that their results do not hold for all $E$-unitary inverse semigroups.

In Section~3 of this paper it is shown that the $C^*$-algebra of an $E$-unitary inverse semigroup is isomorphic to a partial crossed product (in the sense of~\cite{ExelParCross} and~\cite{McClanahanParCross}) of the maximal group image $G$ by the algebra of its subsemigroup of idempotents $E$. In fact, results are given for both the full and reduced $C^*$-algebras that correspond to full and reduced partial crossed products respectively.

Next we use groupoid reductions to apply the results of Section~3 to the inverse semigroups that typically appear in the $C^*$-algebra literature. Most contain a zero element, but inverse semigroups with zero are only $E$-unitary in trivial cases. However, in Section~5 we give crossed product results for strongly $0$-$E$-unitary inverse semigroups, which are precisely the inverse semigroups with zero for which a variant of the $P$-Theorem holds. By considering reductions we are also able to give crossed product results for the $C^*$-algebra of the tight groupoid associated with such an inverse semigroup in~\cite{Exel}. This is especially important in light of examples such as the Cuntz algebras $\mathcal{O}_n$ and the $C^*$-algebras of directed graphs that are quotients of inverse semigroup algebras, and have also been identified as the $C^*$-algebras of the tight groupoid of the relevant inverse semigroups~\cite{Exel}.

In the final section we study Morita equivalence with a view toward understanding the relationship between our results, the work of Khoshkam and Skandalis~\cite{Skandalis}, and related work of Abadie~\cite{abadie,abadie2}. We find that the Morita equivalence in~\cite{abadie} can be viewed as a special case of some of the results in~\cite{Skandalis}. Also, by replacing the cocycle appearing in~\cite{Skandalis} with a morphism between groupoids, we study Morita equivalence in a more general context.  In particular, we give conditions which guarantee that a locally compact groupoid is Morita equivalent to a groupoid of germs of an inverse semigroup action.  To apply this to inverse semigroups,  we investigate the functoriality of the assignment $S \mapsto \mathscr G(S)$ of Paterson's universal groupoid to an inverse semigroup $S$~\cite{Paterson}. We define a condition, called the Khoshkam-Skandalis condition, on a morphism of inverse semigroups that generalizes the one in Proposition~3.9 of~\cite{Skandalis}. It guarantees that a morphism $\p\colon S\to T$ induces a Morita equivalence between $C^*(S)$ and a crossed product of $T$ by a commutative $C^*$-algebra, generalizing the Morita equivalence in Corollary~3.11 of~\cite{Skandalis} for the case where $T$ is a group and $\p$ is the maximal group image homomorphism.  In particular, we generalize Theorem~3.10 of~\cite{Skandalis} from $E$-unitary inverse semigroups to strongly $0$-$E$-unitary inverse semigroups and locally $E$-unitary inverse semigroups. A key role is played by the identification of a certain category of $T$-actions with the category of $\mathscr G(T)$-actions.

\section{Preliminaries}
In this paper, compact will always mean compact and Hausdorff. We will use the term \emph{quasi-compactness} for the condition that every open cover has a finite subcover.

A groupoid $\mathscr G$ is a small category in which each arrow is invertible.  We will use the arrows only approach to groupoids and identify the objects with the space of units $\mathscr G^0$ when convenient.  The domain and range of an arrow are denoted $d$ and $r$ respectively.  For more on topological groupoids, see~\cite{renault,Paterson}.  Following Paterson, we assume that a left Haar system is part of the definition of a locally compact groupoid~\cite{Paterson}.

An \emph{\'etale groupoid} is a topological groupoid $\mathscr G$ whose unit space  is locally compact Hausdorff, and such that the domain map $d\colon \mathscr G \to \mathscr G^0$ (or equivalently the range map) is a local homeomorphism. We do not assume $\mathscr G$ is Hausdorff. The counting measures give a left Haar system for an \'etale groupoid~\cite{Exel,Paterson,renault}.

A semigroup $S$ is an \textit{inverse semigroup} if, for each $s$ in $S$, there exists unique $s^*$ in $S$ such that \[s = s s^* s\quad \text{and}\quad s^* = s^* s s^*.\]
We state some basic definitions concerning inverse semigroups here; a thorough treatment of the subject can be found in~\cite{Lawson}.

There is a natural partial order on $S$ defined by $s \leq t$ if $s = te$ for some idempotent $e$.  Equivalent conditions are the following: $s=ft$ for some idempotent $f$; $s=ts^*s$; and $s=ss^*t$. The subsemigroup $E(S)$ of idempotents of $S$ is commutative, and hence forms a (meet) semilattice for the natural partial order where $e \wedge f= ef$ for $e,f$ in $E(S)$. Every inverse semigroup admits a universal morphism $\sigma\colon S\to G(S)$ onto a group. The group $G(S)$ is the quotient of $S$ by the congruence $s \sim t$ if and only if $se = te$ for some $e$ in $E(S)$. It is universal in the sense that all other morphisms from $S$ to a group factor uniquely through $\sigma$. One calls $G(S)$ the \emph{maximal group image} of $S$ and $\sigma$ the \emph{maximal group homomorphism}. The inverse semigroup $S$ is called \emph{$E$-unitary} if $\sigma^{-1}(1) = E(S)$. An equivalent condition to being $E$-unitary is that $s^*s=t^*t$ and $\sigma(s)=\sigma(t)$ implies $s=t$.

Given a locally compact Hausdorff space $X$, denote by $I_X$ the inverse monoid of all homeomorphisms between open subsets of $X$, with multiplication given by composition on the largest domain where it is defined.  The natural partial order on $I_X$ is given by $f\leq g$ if and only if $f$ is a restriction of $g$ to some open subset of $X$.

A map $\theta\colon S\to T$ of inverse semigroups is called a \emph{dual prehomomorphism} if $\theta(s)\theta(s')\leq \theta(ss')$ all $s,s'\in S$. Let $G$ be a countable discrete group. A \emph{partial action} of $G$ on $X$ is a dual prehomomorphism $\theta\colon G\to I_X$ of inverse monoids (so, in particular, $\theta(1)=1_X$); see~\cite{Lawsonkellendonk} for the equivalence of this definition with the one in, say~\cite{abadie}.  We write $X_{g\inv}$ for the domain of $\theta(g)$ and if $x\in X_{g\inv}$, then we write $gx$ for $\theta(g)(x)$ (which is an element of $X_g$). Given a partial action of $G$ on $X$, one can form a Hausdorff \'etale groupoid \[G\ltimes X=\{(g,x)\mid g\in G, x\in X_{g\inv}\}\]  with the subspace topology of the product topology.  We identify $X$ with $\{1\}\times X$, which will be the unit space.  The domain and range maps are given by $d(g,x)=x$ and $r(g,x) = gx$.  The product is defined by $(g,x)(h,y) = (gh,y)$, when $x=hy$.   The inverse is given by $(g,x)\inv = (g\inv,gx)$.  This construction appears in Abadie~\cite{abadie2}, but was considered independently by Lawson and Kellendonk~\cite{Lawsonkellendonk} in the discrete setting.  We call $G\ltimes X$ the \emph{partial transformation groupoid} of the partial action $\theta$.

Abadie~\cite{abadie2} proved the following result.

\begin{Thm}[Abadie]\label{abadie1}
Let $\theta\colon G\to I_X$ be a partial action of a countable discrete group on a locally compact Hausdorff topological space $X$.    Then the universal $C^*$-algebra of $G\ltimes X$ is isomorphic to the universal partial action cross product $C_0(X)\rtimes G$.
\end{Thm}

An \emph{action} of an inverse semigroup $S$ on $X$ is a morphism $\theta\colon S\to I_X$ such that the union of the domains of $\theta_s$ with $s\in S$ is $X$. We write $X_e$ for the domain of $\theta_e$ for an idempotent $e\in E(S)$. From an inverse semigroup action one can define the \emph{groupoid of germs}, which is similar to the partial transformation groupoid defined above. Let \[\Omega = \{(s,x) \in S \times X \mid x \in X_{s^*s} \},\]
and define an equivalence relation on $\Omega$ by $(s,x) \sim (t,y)$ if $x=y$ and there exists $e$ in $E(S)$ such that $x \in X_e$ and $se = te$. Equivalently, $s\sim t$ if $x=y$ and there exists $u\leq s,t$ such that $x\in X_{u^*u}$.  The class of $(s,x)$ is denoted by $[s,x]$.

The \emph{groupoid of germs} $S\ltimes X$ of the action $\theta$ is the set $\Omega/{\sim}$ with multiplication given by $[s,x][t,y] = [st, y]$ provided $x = \theta_t(y)$. The inverse of $[s,x]$ is $[s^*, \theta_s(x)]$. The topology has basis consisting of all sets of the form \[(s,U)=\{[s,x]\mid x\in U\}\] where $s\in S$ and $U$ is an open subset of $X$. For a detailed construction of this groupoid see~\cite{Exel}.

A \emph{semi-character} of a semilattice $E$ is a non-zero semilattice homomorphism $\p\colon E\to \{0,1\}$.
The space $\wh E$ (topologized as a subspace of $\{0,1\}^E$) of semi-characters is fundamental because $C^*(E) \cong C_0(\wh E)$. Every inverse semigroup $S$ acts on its space of semi-characters.  Putting $E=E(S)$, for $e\in E$, let \[D(e) = \{\p\in \wh E\mid \p(e)=1\}.\] It is a clopen subset of $\wh E$ and the sets of the form $D(e)$ and their complements are a subbasis for the topology on $\wh E$.  Define, for each $s\in S$ a mapping $\beta_s\colon D(s^*s)\to D(ss^*)$ by  $\beta_s(\p)(e) = \p(s^* e s)$ for each $e \in E$, $\p \in D(s^*s)$.  Then $\beta\colon S\to I_{\wh E}$ given by $\beta(s)=\beta_s$ is an action.  Usually, we write $s\p$ for $\beta_s(\p)$.
Paterson's universal groupoid $\mathscr{G}(S)$ is the groupoid of germs of the action $\beta$~\cite{Paterson,Exel,discretegroupoid}. Paterson~\cite{Paterson} proved that $C^*(S) \cong C^*(\mathscr{G}(S))$ and $C_{r}^*(S) \cong C_{r}^*(\mathscr{G}(S))$.

\section{The $C^*$-algebra of an $E$-unitary inverse semigroup}
Let $S$ be a countable $E$-unitary inverse semigroup with idempotent set $E$ and maximal group image $G = G(S)$. Our goal in this section is to show that $C^*(S)$ and $C^*_r(S)$ are partial action cross products $C^*(E)\rtimes G$ and $C^*(E)\rtimes_r G$, respectively.  The first author in his thesis gave a direct construction of a partial action of $G$ on $C^*(E)$ and showed the corresponding cross product algebra was isomorphic to the algebra $C^*(S)$.  We use here the theory of partial actions of groups on topological spaces. In particular, we show that the universal groupoid $\mathscr G(S)$ of $S$ is isomorphic to the partial transformation groupoid $G\ltimes \wh E$ of an appropriate partial action of $G$ on $\wh E$.

We wish to construct a partial action $\theta\colon G\to I_{\wh E}$. It is well-known in inverse semigroup theory that $G$ acts partially on $E$: this is one of Lawson's interpretations of McAlister's $P$-theorem~\cite{Lawsonkellendonk}.  We extend the partial action to $\wh E$ to define $\theta$.  In some sense, this result is not unprecedented.  In fact, it is known that $\mathscr G(S)$ is algebraically the underlying groupoid of a certain inverse semigroup $T$ containing $S$~\cite{lenz}.  It is easy to verify that if $S$ is $E$-unitary, then $T$ is as well and they both have maximal group image $G$.  Therefore, $G$ acts partially on $E(T)=\wh E$ and the underlying groupoid of $T$ is the partial transformation groupoid $G\ltimes \wh E$; this is essentially the content of McAlister's $P$-theorem as interpreted via~\cite{Lawsonkellendonk}.  So what we need to do is make sure that everything works topologically.  However, we do not assume here any knowledge of the $P$-theorem or the inverse semigroup structure on $\mathscr G(S)$.

Define $\theta\colon G\to I_{\wh E}$ by setting \[\theta(g) = \bigcup_{s\in \sigma\inv(g)} \beta(s)\] where we are viewing partial functions as relations.   To show that $\theta(g)$ is a well-defined continuous map, we just need to show that if $\sigma(s)=\sigma(t)$  and $\p\in D(s^*s)\cap D(t^*t)$, then $s\p = t\p$ (where we drop $\beta$ from the notation).  To prove this, we need a well-known fact about $E$-unitary inverse semigroups.

\begin{Lemma}\label{meets}
Let $S$ be an $E$-unitary inverse semigroup and suppose $s,t\in S$ satisfy $\sigma(s)=\sigma(t)$.  Then $ts^*s=st^*t$ is the meet of $s,t$ in the natural partial order.  Moreover, if $u=ts^*s$, then $u^*u = s^*st^*t$.
\end{Lemma}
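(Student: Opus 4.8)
The plan is to lean on the characterization of $E$-unitary inverse semigroups recalled just above the statement: if $\sigma(x)=\sigma(y)$ and $x^*x=y^*y$, then $x=y$. I would first prove the equality $ts^*s=st^*t$ together with the formula $u^*u=s^*st^*t$ in one stroke, and only afterwards establish that $u=ts^*s$ is the greatest lower bound of $s$ and $t$.

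For the equality, set $a=ts^*s$ and $b=st^*t$. Applying $\sigma$ and using that $\sigma$ sends every idempotent to $1$ gives $\sigma(a)=\sigma(t)=\sigma(s)=\sigma(b)$. Next I would compute the domain idempotents. Since $a^*=(ts^*s)^*=s^*st^*$, commutativity of $E(S)$ yields $a^*a=s^*st^*ts^*s=s^*st^*t$, and symmetrically $b^*b=t^*ts^*st^*t=s^*st^*t$. Thus $\sigma(a)=\sigma(b)$ and $a^*a=b^*b$, so the $E$-unitary hypothesis forces $a=b$. This simultaneously proves $ts^*s=st^*t$ and, setting $u=a$, the moreover clause $u^*u=s^*st^*t$.

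It then remains to identify $u$ as the meet. That $u$ is a lower bound is immediate from its two expressions: $u=t(s^*s)\leq t$ and $u=s(t^*t)\leq s$, because $s^*s$ and $t^*t$ are idempotent. For the greatest-lower-bound property, suppose $v\leq s$ and $v\leq t$, and write $v=sv^*v=tv^*v$ using the standard rewriting of the natural partial order. Because $v\leq s$, the idempotent $v^*v$ satisfies $v^*v\leq s^*s$, that is $s^*sv^*v=v^*v$, and likewise $t^*tv^*v=v^*v$. Then $uv^*v=ts^*sv^*v=tv^*v=v$, so $v=u(v^*v)\leq u$, as required.

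The only step requiring genuine care is the invocation of the $E$-unitary property: the whole argument turns on recognizing that the two candidate meets share the same image under $\sigma$ and the same idempotent $x^*x$, after which uniqueness does the work. Everything else is a routine consequence of the commutativity of $E(S)$ and the elementary equivalences $v\leq s \iff v=sv^*v$ describing the natural partial order.
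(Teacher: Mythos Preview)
Your proof is correct and follows essentially the same route as the paper's: both arguments set $u=ts^*s$, $v=st^*t$, compute $u^*u=v^*v=s^*st^*t$ and $\sigma(u)=\sigma(v)$, invoke the $E$-unitary condition to conclude $u=v$, and then verify the meet property by showing any common lower bound $w$ satisfies $uw^*w=w$. The only cosmetic difference is that you spell out the inequality $v^*v\leq s^*s$ explicitly, whereas the paper absorbs it into the single line $uw^*w=st^*tw^*w=sw^*w=w$.
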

\begin{proof}
Let $u=ts^*s$ and $v=st^*t$.  Then $u^*u= s^*st^*ts^*s=s^*st^*t$, $v^*v= t^*ts^*st^*t=s^*st^*t$ and $\sigma(u) =\sigma(t)=\sigma(s)=\sigma(v)$.  Because $S$ is $E$-unitary, this implies $u=v$.  Clearly $u\leq t, v\leq s$ so $u\leq s,t$.  If $w\leq s,t$, then $uw^*w = st^*tw^*w = sw^*w=w$ so $w\leq u$.  This completes the proof.
\end{proof}

To obtain now that $\theta(g)$ is well defined, suppose that $\sigma(s)=\sigma(t)$ and $\p\in D(s^*s)\cap D(t^*t)$.  Setting $u=ts^*s=st^*t$, we have by Lemma~\ref{meets} that $D(u^*u) = D(s^*st^*t) = D(s^*s)\cap D(t^*t)$ and $u\leq s,t$.  Thus $\p\in D(u^*u)$ and $s\p=u\p=t\p$.   So $\theta(g)$ is a well-defined continuous function with open domain $X_{g\inv} = \bigcup_{s\in \sigma\inv (g)} D(s^*s)$.  From the definition, it is immediate that $\theta(g\inv) = \theta(g)\inv$ and so $\theta(g)\colon X_{g\inv}\to X_g$ is a homeomorphism, and hence belongs to $I_{\wh E}$.  Since $\bigcup_{e\in E} D(e) = \wh E$, it follows that $\theta(1)=1_X$.  Finally, we verify that $\theta$ is a dual prehomomorphism by noting that since $\sigma\inv(g)\sigma\inv(h)\subseteq \sigma\inv(gh)$, it follows that
\begin{align*}
\theta(g)\theta(h) &= \bigcup_{s\in \sigma\inv(g)}\beta(s)\cdot\bigcup_{t\in \sigma\inv(h)} \beta(t)= \bigcup_{x\in \sigma\inv(g)\sigma\inv(h)}\beta(x)\\ &\leq \bigcup_{s\in \sigma\inv(gh)}\beta(s) = \theta(gh).
\end{align*}
This proves that $\theta$ is a partial action.  From now on we suppress the notation $\beta,\theta$.
Next we prove that $G\ltimes \wh E$ is isomorphic to $\mathscr G(S)$ as a topological groupoid.  This gives a new proof that $\mathscr G(S)$ is Hausdorff for $E$-unitary inverse semigroups. An exact characterization of inverse semigroups with Hausdorff universal groupoids is given in~\cite[Theorem~5.17]{discretegroupoid}.

\begin{Thm}\label{main1}
Let $S$ be a countable $E$-unitary inverse semigroup with idempotent set $E$ and maximal group image $G$.  Then the universal groupoid $\mathscr G(S)$ is topologically isomorphic to $G\ltimes \wh E$.
\end{Thm}
\begin{proof}
Recall that arrows of $\mathscr G(S)$ are equivalence classes $[s,\p]$ of pairs $(s,\p)$ with $s\in S$ and $\p\in D(s^*s)$, where $(s,\p)\sim (t,\psi)$ if and only if $\p=\psi$ and there exists $u\in S$ with $u\leq s,t$ and $\p\in D(u^*u)$.  The topology has basis of open sets $(s,U)$ where $U\subseteq D(s^*s)$ is an open subset of $\wh E$ and $(s,U) = \{[s,\p]\mid \p\in U\}$.

Define functors $\Phi\colon \mathscr G(S)\to \gpd$ and $\Psi\colon \gpd \to \mathscr G(S)$ as follows.  Define $\Phi[s,\p] = (\sigma(s),\p)$ and $\Psi(g,\p) = [s,\p]$ where $s\in \sigma\inv (g)$ and $\p\in D(s^*s)$.  First we show that $\Phi$ and $\Psi$ are well-defined functors, beginning with $\Phi$.  As $(s,\p)\sim (t,\psi)$ implies $s$ and $t$ have a common lower bound, if $(s,\p)\sim (t,\psi)$, then $\p=\psi$ and $\sigma(s)=\sigma(t)$.  Moreover, $\p\in X_{g\inv}$ by construction.  Thus $\Phi$ is well-defined.  To see that $\Psi$ is well-defined, note that if $(g,\p)\in \gpd$, then by definition there exists $s\in \sigma\inv (g)$ so that $\p\in D(s^*s)$.  If $t\in \sigma\inv (g)$ so that $\p\in D(t^*t)$, then $u=ts^*s$ is a common lower bound for $s,t$ with $\p\in D(s^*s)\cap D(t^*t) = D(u^*u)$ by Lemma~\ref{meets}.  So $(s,\p)\sim(t,\p)$ and hence $\Psi$ is well-defined.  Notice that both $\Phi$ and $\Psi$ are the identity map on the unit space $\wh E$.

It is routine to verify that $\Phi$ is a functor.
Let us verify that $\Psi$ is a functor. Let $(g,\p)\in \gpd$ and choose $s\in \sigma\inv (g)$ with $\p\in D(s^*s)$.  Then $d(\Psi(g,\p)) = d[s,\p] = \p = \Psi(d(g,\p))$ and $r(\Psi(g,\p)) = r[s,\p] = s\p =g\p =\Psi(r(g,\p))$.  Moreover, if $(g,\p),(h,\psi)\in \gpd$ with $\p = h\psi$ and $s\in \sigma\inv (g)$, $t\in \sigma\inv (h)$ with $\p \in D(s^*s)$, $\psi \in D(t^*t)$, then
\[\Psi(g,\p)\Psi(h,\psi) = [s,\p][t,\psi] = [st,\p\psi] = \Psi(gh,\psi)\] since $\sigma(st) = gh$ and \[1=\p(s^*s) = h\psi(s^*s) = t\psi(s^*s) = \psi(t^*s^*st) = \psi((st)^*st).\]

Next observe that if $[s,\p]\in \mathscr G(S)$, then $\Psi\Phi[s,\p] = \Psi(\sigma(s),\p) = [s,\p]$ since $\p \in D(s^*s)$.  Also if $(g,\p)\in \gpd$ and $s\in \sigma^{-1}(g)$ with $\p \in D(s^*s)$, then $\Phi\Psi(g,\p) = \Phi[s,\p] = (g,\p)$.  Thus $\Phi$ and $\Psi$ are inverse functors.  To show they are homeomorphisms it suffices to show that if  $(s,U)$ is a basic open set of $\mathscr G(S)$, then $\Phi(s,U)$ is open and if $U$ is an open subset of $\wh E$ and $g\in G$, then $\Psi(\{g\}\times U)$ is open.  But $\Phi(s,U) = \{\sigma(s)\} \times U$, which is open, whereas \[\Psi(\{g\}\times U) = \bigcup_{s\in \sigma\inv(g)} (s,U\cap D(s^*s)),\] which again is open.  This completes the proof.
\end{proof}

We now state some corollaries.  The first is well-known~\cite{Paterson}.

\begin{Cor}
The universal groupoid of an $E$-unitary inverse semigroup is Hausdorff.
\end{Cor}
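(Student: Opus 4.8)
The plan is to read this off directly from Theorem~\ref{main1}. The key point, already recorded when the partial transformation groupoid was introduced in Section~2, is that $\gpd$ is Hausdorff by construction: it carries the subspace topology inherited from the product $G\times \wh E$, and this product is Hausdorff because $G$ is discrete and $\wh E$ is Hausdorff, being a subspace of $\{0,1\}^E$ (a product of two-point discrete spaces). Since a subspace of a Hausdorff space is Hausdorff, $\gpd$ is Hausdorff.

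Now, for a countable $E$-unitary inverse semigroup $S$, Theorem~\ref{main1} furnishes a topological isomorphism $\mathscr G(S)\cong \gpd$; in particular the underlying spaces are homeomorphic. Because the Hausdorff property is invariant under homeomorphism, $\mathscr G(S)$ is Hausdorff, which is the assertion. Thus the work of the section has already been done, and the corollary is a one-line consequence of the identification of the universal groupoid with a partial transformation groupoid.

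The one point deserving comment is the absence of a countability hypothesis in the corollary as stated. The only place countability entered the section is in invoking Abadie's Theorem~\ref{abadie1}, which plays no role in the construction of the partial action $\theta$, in the well-definedness argument resting on Lemma~\ref{meets}, or in the verification that $\Phi$ and $\Psi$ are mutually inverse topological isomorphisms. Hence the homeomorphism $\mathscr G(S)\cong \gpd$ of Theorem~\ref{main1} is valid for an arbitrary $E$-unitary inverse semigroup, and the subspace argument above applies verbatim. The main (and essentially only) obstacle is this bit of bookkeeping around the countability assumption; the topological content itself is immediate once one has the identification with a partial transformation groupoid.
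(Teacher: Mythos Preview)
Your proposal is correct and is exactly the approach the paper intends: the corollary is stated without proof, since immediately before Theorem~\ref{main1} the authors note that the topological isomorphism $\mathscr G(S)\cong \gpd$ ``gives a new proof that $\mathscr G(S)$ is Hausdorff for $E$-unitary inverse semigroups,'' and the Hausdorffness of $\gpd$ was built into its definition in Section~2. Your remark that countability plays no role in the construction of $\theta$ or in the proof of Theorem~\ref{main1} is accurate and neatly disposes of the apparent discrepancy between the hypotheses of the theorem and those of the corollary.
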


Since $C^*(\mathscr G(S))\cong C^*(S)$ we obtain a new proof of the following result from the first author's thesis by applying Theorem~\ref{abadie1}.

\begin{Cor}\label{coruniversal}
Let $S$ be a countable $E$-unitary inverse semigroup with idempotent set $E$ and maximal group image $G$.  Then $C^*(S)\cong C^*(E)\rtimes G\cong C_0(\wh E)\rtimes G$.
\end{Cor}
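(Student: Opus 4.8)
The plan is to assemble the desired isomorphism by concatenating three facts already at our disposal. By Paterson's theorem recorded in the preliminaries, $C^*(S)\cong C^*(\mathscr G(S))$. By Theorem~\ref{main1}, the universal groupoid $\mathscr G(S)$ is topologically isomorphic to the partial transformation groupoid $G\ltimes \wh E$ of the partial action $\theta$ constructed above; since a topological isomorphism of groupoids induces an isomorphism of their universal $C^*$-algebras, this yields $C^*(\mathscr G(S))\cong C^*(G\ltimes \wh E)$.

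Next I would apply Abadie's theorem (Theorem~\ref{abadie1}) to the partial action $\theta\colon G\to I_{\wh E}$. To do so I first verify its hypotheses: the group $G=G(S)$ is countable, being a quotient of the countable inverse semigroup $S$, and $\wh E$ is locally compact Hausdorff, being (a subspace of) $\{0,1\}^E$. Theorem~\ref{abadie1} then gives $C^*(G\ltimes \wh E)\cong C_0(\wh E)\rtimes G$, the universal partial crossed product for $\theta$.

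Finally, I would invoke the identification $C^*(E)\cong C_0(\wh E)$ from the preliminaries to rewrite the crossed product. Transporting the topological partial action $\theta$ through this isomorphism produces the corresponding dual partial action of $G$ on $C^*(E)$, and the two partial crossed products agree, so $C_0(\wh E)\rtimes G\cong C^*(E)\rtimes G$. Chaining the isomorphisms gives $C^*(S)\cong C^*(E)\rtimes G\cong C_0(\wh E)\rtimes G$, as claimed.

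I expect the only point genuinely requiring care to be this last step: one must confirm that the partial crossed product built from the topological partial action on $\wh E$ coincides, under $C^*(E)\cong C_0(\wh E)$, with the partial crossed product of $C^*(E)$ by the induced dual action. This is essentially automatic, since $\theta$ was defined at the level of the space $\wh E$ and both crossed products are therefore built from the same underlying partial action; nonetheless it is the one place where the naturality of the $C^*$-algebra construction must be used, rather than simply quoted.
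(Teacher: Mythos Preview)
Your proposal is correct and follows exactly the same route as the paper: combine Paterson's isomorphism $C^*(S)\cong C^*(\mathscr G(S))$, Theorem~\ref{main1} giving $\mathscr G(S)\cong G\ltimes \wh E$, and Abadie's Theorem~\ref{abadie1}, then use $C^*(E)\cong C_0(\wh E)$. The paper's proof is in fact just the single sentence preceding the corollary, so your write-up is, if anything, more detailed than what appears there.
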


We will now show $C^*_r(S)\cong C^*(E)\rtimes_r G$. If Abadie's crossed product result extended to the reduced algebra of the partial transformation groupoid then the isomorphism would be immediate, since $C^*_r(\mathscr G(S))\cong C_r^*(S)$. The authors suspect that such a result is true, but in the absence of a proof, we prove the isomorphism directly. The proof makes use of the original construction of the reduced partial cross product $A \rtimes_r G$~ of a $C^*$-algebra $A$ by a discrete group $G$ \cite{McClanahanParCross}. McClanahan constructs a covariant representation $\til{\pi} \times \lambda \colon A \rtimes_r G \to B(H \otimes \ell^2(G))$ arising from a representation $\pi \colon A \to B(H)$ of $A$ on a Hilbert space $H$, where $\lambda$ is the left regular representation of $G$. If $\pi$ is faithful then $\til{\pi} \times \lambda$ is also faithful~\cite[Proposition~3.4]{McClanahanParCross}. Applying this to the left regular representation of the semilattice, $\pi \colon C^*(E) \to B(\ell^2(E))$, we get a faithful representation of  $C^*(E)\rtimes_r G$ on $B(\ell^2(E) \otimes \ell^2(G))$. Define, for $s\in S$, $F_s\colon G \to C^*(E)$ by
\[F_s(g) = \begin{cases} ss^* & \text{if}\ \sigma(s)=g\\ 0 & \text{else.}\end{cases}\]
The partial crossed product $C^*(E)\rtimes_r G$ is the closed span of the elements $F_s$~\cite{McClanahanParCross}. Moreover,

\[(\til{\pi} \times \lambda)(F_s)( \delta_e \otimes \delta_g ) = \pi(\theta_{g^{-1}\sigma(s^*)})\delta_e  \otimes \lambda_{\sigma(s)g}.\]

Note that the above equation reduces to $0$, unless there exists $t \in \sigma^{-1}(g)$ such that $e \leq t^*s^*st$. Define an operator $U: \ell^2(S)\to\ell^2(E) \otimes \ell^2(G)$ by $\delta_s \mapsto \delta_{s^*s} \otimes \delta_{\sigma(s)}$.

\begin{claim} Suppose for some $s \in S$ that $(\widetilde{\pi} \times \lambda)(F_s)(\delta_e \otimes \delta_g) \neq 0$. Then $\delta_e \otimes \delta_g \in \text{Ran}(U)$.
\end{claim}
\begin{proof} Suppose that $(\widetilde{\pi} \times \lambda)(F_s)(\delta_e \otimes \delta_g) \neq 0$. Then there exists $t \in \sigma^{-1}(g)$ such that $e \leq t^*s^*st$. In that case notice that $U(\delta_{te}) = \delta_{e} \otimes \delta_{g}$.

\end{proof}

\begin{Thm}\label{main1reduced} Let $S$ be a countable $E$-unitary inverse semigroup with idempotent set $E$ and maximal group image $G$.  Then $C^*_r(S)\cong C^*(E)\rtimes_r G$.
\end{Thm}
\begin{proof}
Note that the map $U$ defined above is injective on basis elements because $S$ is $E$-unitary. By the above claim, $U$ is a unitary operator from $\ell^2(S)$ to the essential subspace of $(\widetilde{\pi} \times \lambda)(C^*(E)\rtimes_r G)$. 

Recall that $C^*_r(S)$ may be viewed as a subalgebra of $B(\ell^2(S))$ via the left regular representation $\Lambda$, where
\[\Lambda_s \delta_t = \begin{cases} \delta_{st} & \text{if}\ s^*st = t \\
                        0  & \text{otherwise.}\end{cases} \]

We will show that $U$ intertwines the operators $\Lambda_s$ and $(\widetilde{\pi} \times \lambda)(F_s)$ and therefore implements a $*$-isomorphism from $C^*_r(S)$ to the faithful image of $C^*(E)\rtimes_r G$ inside $B(\ell^2(E) \otimes \ell^2(G))$. Given $s,t$ in $S$,
\[U \Lambda_s \delta_t = U \delta_{st} =  \delta_{(st)^*st} \otimes \delta_{\sigma(st)},\] provided  $s^* s t = t$.
On the other hand,
\begin{align*}
[(\til\pi\times \lambda)(F_s)] U\delta_t &= (\til\pi \times \lambda)(F_s)(\delta_{t^*t} \otimes \delta_{\sigma(t)})\\
&= \pi(\theta_{\sigma(t^*)\sigma(s^*)}(ss^*)) \delta_{t^*t} \otimes \delta_{\sigma(s)\sigma(t)} \\
&= \pi(t^*s^*st) \delta_{t^*t} \otimes \delta_{\sigma(s)\sigma(t)} \\
&= \delta_{t^*t} \otimes \delta_{\sigma(s)\sigma(t)}\quad (\text{provided}\  t^*t \leq t^*s^*st) \\
&= \delta_{(st)^* st} \otimes \delta_{\sigma(st)}. \\
\end{align*}

Notice that the conditions $s^*st = t$, $t^*t = t^*s^*st$, and $t^*t \leq  t^*s^*st$ are equivalent and that both operators are 0 when they fail to hold.
\end{proof}

We thank Alcides Buss for finding a gap in the proof of the above theorem in an earlier version of this paper.

\section{Ideal quotients and reductions}
Let $S$ be an inverse semigroup with zero element $z$.  By the \emph{contracted} universal and reduced $C^*$-algebras of $S$ we mean the quotient of the usual algebras by the one-dimensional, central, closed ideal $\CC z$.  Since we only will consider contracted algebras for inverse semigroups with zero, we use the notations $C^*(S)$ and $C^*_r(S)$ for the contracted $C^*$-algebras in this setting.  No confusion should arise.  These algebras encode $*$-representations of $S$ that send $z$ to zero.  From now on we identify $z$ with the $0$ of the algebra and drop the notation $z$.

Exel defines~\cite{Exel} the universal groupoid in this context.  If $E$ is a semilattice with zero, then we redefine $\wh E$ to be the space of all non-zero homomorphisms $\p\colon E\to \{0,1\}$ such that $\p(0)=0$.  This is a closed invariant subspace of the usual semi-character space.  This abuse of notation should not lead to any confusion.

If $\mathscr G$ is a groupoid and $X$ is a subspace of the unit space, then the \emph{reduction} $\mathscr G|_X$ of $\mathscr G$ to $X$ is the full subgroupoid of $\mathscr G$ with unit space $X$ and arrows $\{g\in \mathscr G\mid d(g),r(g)\in X\}$.  The universal groupoid $\mathscr G(S)$ of an inverse semigroup $S$ with zero is then the reduction of Paterson's universal groupoid to the space of semi-characters sending $0$ to $0$.  One can easily prove, cf.~\cite{Exel}, that $S$ and its universal groupoid have the same universal and reduced $C^*$-algebras in this context.

An \emph{ideal} of a semigroup $S$ is a subset $I$ such that $SI\cup IS\subseteq I$.  The \emph{Rees quotient} $S/I$ is the inverse semigroup obtained by identifying all elements of $I$ to a single element, which will be the zero of $S/I$.

\begin{Lemma}\label{ideal}
Let $S$ be an inverse semigroup (with or without zero) and idempotent set $E$.  Let $I$ be a proper ideal of $S$.  Let \[I^{\perp} = \{\p\in \wh E\mid \p(E\cap I) = 0\}.\]  Then $I^{\perp}$ is a closed subspace of $\wh E$ homeomorphic to $\wh{E(S/I)}$, which moreover is invariant under the action of $S$. The restriction of the action of $S$ to $I^{\perp}$ factors through $S/I$ as its usual action of $\wh{E(S/I)}$ (up to isomorphism).
\end{Lemma}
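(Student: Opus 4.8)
The plan is to construct an explicit homeomorphism $\Theta\colon\wh{E(S/I)}\to I^{\perp}$ and then verify that it transports the action of $S/I$ to the restricted action of $S$. First I would note that $E\cap I$ is an order ideal of the semilattice $E$ (if $e\in E\cap I$ and $f\in E$, then $ef\in E\cap I$), so that the idempotent semilattice $E(S/I)$ of the Rees quotient is precisely the semilattice Rees quotient of $E$ by $E\cap I$, namely $E$ with $E\cap I$ collapsed to the zero $0$ of $E(S/I)$. Write $q\colon S\to S/I$ for the quotient homomorphism; it restricts to the quotient map $E\to E(S/I)$, which is injective off $E\cap I$. Closedness of $I^{\perp}$ is then immediate, since $I^{\perp}=\bigcap_{e\in E\cap I}(\wh E\setminus D(e))$ is an intersection of complements of the clopen sets $D(e)$.

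For the homeomorphism I would use precomposition with $q$. A semi-character $\psi\in\wh{E(S/I)}$ satisfies $\psi(0)=0$, so $\Theta(\psi):=\psi\circ q$ vanishes on $E\cap I$ and is a non-zero homomorphism $E\to\{0,1\}$, i.e.\ $\Theta(\psi)\in I^{\perp}$. Conversely, any $\p\in I^{\perp}$ kills $E\cap I$ and therefore factors through the Rees quotient as $\p=\psi\circ q$ for a unique non-zero $\psi$ with $\psi(0)=0$; this is the inverse of $\Theta$. Both maps are given coordinatewise --- $\Theta(\psi)(e)=\psi(q(e))$ and $\Theta\inv(\p)(x)=\p(e)$ for any $e$ with $q(e)=x\neq 0$ --- hence are continuous as maps of subspaces of $\{0,1\}^E$ and $\{0,1\}^{E(S/I)}$, so $\Theta$ is a homeomorphism.

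Invariance is where the ideal hypothesis does the work. Given $\p\in I^{\perp}\cap D(s^*s)$ and $f\in E\cap I$, I compute $(s\p)(f)=\p(s^*fs)$; since $f\in I$ and $I$ is an ideal, $s^*fs\in SIS\subseteq I$, and $s^*fs$ is idempotent, so $s^*fs\in E\cap I$ and $\p(s^*fs)=0$. Thus $s\p$ vanishes on $E\cap I$; as $s\p\in D(ss^*)$ it is a genuine semi-character, so $s\p\in I^{\perp}$ and $I^{\perp}$ is invariant.

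Finally I would establish the intertwining $\beta_s\circ\Theta=\Theta\circ\beta'_{\bar s}$, where $\bar s=q(s)$, $\beta'$ is the action of $S/I$, and $D'(\cdot)$ is the analogue of $D(\cdot)$ in $\wh{E(S/I)}$. If $s\in I$, then $\bar s=0$ and both sides act on the empty set (on one side $D'(0)=\varnothing$, on the other $D(s^*s)\cap I^{\perp}=\varnothing$, since $s^*s\in E\cap I$ forces $\p(s^*s)=0$), so assume $s\notin I$, whence $s^*s,ss^*\notin I$ (else $s=ss^*s\in I$). Because $\Theta(\psi)(s^*s)=\psi(\overline{s^*s})$, the homeomorphism carries the domain $D'(\overline{s^*s})$ of $\bar s$ onto $D(s^*s)\cap I^{\perp}$, and for every $e\in E$,
\begin{equation*}
(\beta_s\Theta(\psi))(e)=\p(s^*es)=\psi(q(s^*es))=(\beta'_{\bar s}\psi)(q(e))=\bigl(\Theta(\beta'_{\bar s}\psi)\bigr)(e),
\end{equation*}
using that $q$ is an inverse-semigroup homomorphism. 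Hence the restricted $S$-action factors through $q$ and is conjugate, via $\Theta$, to the usual $S/I$-action on $\wh{E(S/I)}$. The only real obstacle is the bookkeeping in identifying $E(S/I)$ with the semilattice Rees quotient and checking that collapsing $E\cap I$ to $0$ matches exactly the convention $\psi(0)=0$; once that is set up, each remaining step is a one-line coordinatewise or ideal-theoretic computation.
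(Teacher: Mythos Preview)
Your proposal is correct and follows essentially the same approach as the paper's proof: identify semi-characters of $E(S/I)$ with semi-characters of $E$ vanishing on $E\cap I$ via precomposition with the quotient map, observe closedness of $I^{\perp}$, and verify invariance using $s^*fs\in E\cap I$ for $f\in E\cap I$. The paper is terser (declaring the bijection and topology match ``trivial'' and closedness ``clear''), whereas you spell out the intertwining $\beta_s\circ\Theta=\Theta\circ\beta'_{\bar s}$ and the case $s\in I$ explicitly; aside from a harmless slip writing $\p$ for $\Theta(\psi)$ in the displayed chain, the arguments coincide.
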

\begin{proof}
It is trivial that a non-zero homomorphism of semilattices with zeroes  $\p\colon E(S/I)\to \{0,1\}$ is the same thing as a morphism $\p\colon E(S)\to \{0,1\}$ vanishing on $I$ and that the topologies on $\wh {E(S/I)}$ and $I^{\perp}$ agree under these identifications.  Hence $I^{\perp}\cong \wh {E(S/I)}$.  Also $I^{\perp}$ is clearly closed.   Suppose that $\p \in D(s^*s)\cap I^{\perp}$.  Then $\p(s^*s)=1$ implies that $s^*s\notin I$ and hence $s\notin I$.    Let us check that $s\p \in I^{\perp}$.  Indeed, if $e\in E\cap I$, then $s\p(e) = \p(s^*es) =0$ since $s^*es\in E\cap I$ and $\p$ vanishes on $E\cap I$.  Thus $s\p\in I^{\perp}$.

Since we saw no element of $I$ is defined on $I^{\perp}$, it follows that the restricted action factors through $S/I$.  The action is clearly an isomorphic with the usual action of $S/I$ on $\wh {E(S/I)}$.
\end{proof}

The following proposition is an immediate consequence.

\begin{Prop}\label{reduction}
Let $S$ be an inverse semigroup and $I$ an ideal.  Then $\mathscr G(S/I)$ is topologically isomorphic to $\mathscr G(S)|_{I^{\perp}}$.
\end{Prop}

\section{Strongly $0$-$E$-unitary inverse semigroups}
If $S$ is an inverse semigroup with zero and $G$ is a group, then a \emph{partial homomorphism} $\p\colon S\to G$ is a mapping $\p\colon S\setminus \{0\}\to G$ such that $\p(st)=\p(s)\p(t)$ whenever $st\neq 0$.  Notice that the image of $\p$ need not be a subgroup of $G$.  If $e\in E(S)\setminus \{0\}$, then $\p(e)^2=\p(e)$ so $\p(e)=1$.   One calls a partial homomorphism $\p\colon S\to G$ \emph{idempotent pure} if $\p^{-1}(1) = E(S)$.  An inverse semigroup $S$ with zero is called \emph{$0$-$E$-unitary} if $s\geq e\neq 0$ with $e$ idempotent implies $s=s^2$.  It is called \emph{strongly $0$-$E$-unitary}~\cite{Lawson0eunit,Fountain} if it admits an idempotent pure partial homomorphism to a group (some authors use the term strongly $E^*$-unitary).  A strongly $0$-$E$-unitary inverse semigroup is $0$-$E$-unitary.

Margolis first observed that there is a universal group associated to an inverse semigroup $S$ with zero~\cite{undeceunit,Lawsoncoimbra}.  Let $S$ be an inverse semigroup with zero and define $\mathscr U(S)$ to be the group generated by the underlying set of $S$, subject to the relations $s\cdot t=st$ if $st\neq 0$.  Then the identity map on $S$ induces  a partial homomorphism $\iota\colon S\to \mathscr U(S)$ and all partial homomorphisms from $S$ to a group factor uniquely through this one.  Thus $S$ is strongly $0$-$E$-unitary if and only if $\iota$ is idempotent pure.  Note that if $S$ is finite, one can still have that $\mathscr U(S)$ is infinite so the map $\iota$ is by no means onto.  In fact, the second author showed it is undecidable whether a finite inverse semigroup is strongly $0$-$E$-unitary~\cite{undeceunit}.  If $S$ is an $E$-unitary inverse semigroup, then $S^0=S\cup \{0\}$ is easily verified to be strongly $0$-$E$-unitary with universal group the maximal group image $G(S)$ of $S$.

Nearly all the inverse semigroups studied in $C^*$-algebra theory are strongly $0$-$E$-unitary.  For instance, the Cuntz semigroup~\cite{Paterson}, also known as the polycyclic inverse monoid~\cite{Lawson}, on a set $X$ of cardinality at least $2$ is strongly $0$-$E$-unitary.  Recall that $P_X = \langle X\mid x^*y = \delta_{xy}\rangle$.  Each non-zero element of $P_X$ can be uniquely written $wu^*$ with $w,u$ positive words over $X$.  One has $\mathscr U(P_X)=F_X$, the free group on $X$, and the natural partial homomorphism $\iota\colon P_X\to F_X$ takes $wu^*$ to the reduced form of $wu\inv$.  The non-zero idempotents of $P_X$ are the elements of the form $uu^*$ and so $\iota$ is idempotent pure.  More generally, if $\Gamma$ is a connected graph (which is not just a single loop), then the graph inverse semigroup~\cite{Paterson} associated to $\Gamma$ is strongly $0$-$E$-unitary with universal group the fundamental group of the underlying graph.  Kellendonk's tiling inverse semigroups are also known to be strongly $0$-$E$-unitary~\cite{Tiling,Lawsoncoimbra}.

It was independently observed by Margolis, McAlister and the second author that strongly $0$-$E$-unitary inverse semigroups are precisely the Rees quotients of $E$-unitary inverse semigroups.  Namely, if $\theta\colon S\to G$ is an idempotent pure partial homomorphism such that $G$ is generated by the image of $\theta$ (one can always assume this), then $T=\langle (s,\theta(s))\rangle\subseteq S\times G$ is an $E$-unitary inverse semigroup with maximal group image $G$ and $I=(0\times G)\cap T$ is an ideal with $T/I\cong S$.  Conversely, if $T$ is an $E$-unitary inverse monoid with maximal group image $G$ and $I$ is an ideal, then $S=T/I$ is strongly $0$-$E$-unitary with idempotent pure partial homomorphism defined by restricting $\sigma\colon T\to G$ to the complement of $I$.

 We now obtain quite easily that the $C^*$-algebra of a strongly $0$-$E$-unitary inverse semigroup $S$ with universal group $G$ and idempotent set $E$ is a partial action cross product $C^*$-algebra for an action of $G$ on $C^*(E)$ (remember we are working in the category of inverse semigroups with $0$).  Indeed, write $S$ as $T/I$ where $T$ is $E$-unitary with maximal group image $G$ and $I$ is an ideal of $T$.  We saw in Lemma~\ref{ideal} that $\wh {E(S)}$ can be identified with the $T$-invariant closed subspace $I^{\perp}$ of $\wh{E(T)}$.    From the definition of the partial action of $G$ on $\wh {E(T)}$, if $g\p$ is defined for $g\in G$ and $\p \in \wh{E(T)}$, then there exists an element of $t\in T$ with $t\p = g\p$.  It follows that $I^{\perp}$  is invariant under the action of $G$, and so in particular, $G$ acts partially on $\wh{E(S)}$ via the identification of $\wh {E(S)}$ with $I^{\perp}$.  Clearly $G\ltimes \wh{E(S)}$ is topologically isomorphic to the reduction of $G\ltimes \wh{E(T)}$ to $I^\perp$.  Thus, by Proposition~\ref{reduction} and Theorem~\ref{main1}, we obtain the following result.

\begin{Thm}\label{main2}
Let $S$ be a countable strongly $0$-$E$-unitary inverse semigroup with universal group $G$ and idempotent set $E$.  Then there is a partial action of $G$ on $\wh{E}$ such that $\mathscr G(S)\cong G\ltimes \wh {E}$; in particular, $\mathscr G(S)$ is Hausdorff.  Consequently,  $C^*(S)\cong C^*(E)\rtimes G\cong C_0(\wh E)\rtimes G$.
\end{Thm}

The corresponding result for reduced algebras also holds. The proof is almost identical to the proof of Theorem \ref{main1reduced}.

\begin{Thm}\label{main2reduced}
Let $S$ be a countable $0$-$E$-unitary inverse semigroup with idempotent set $E$ and maximal group image $G$.  Then $C^*_r(S)\cong C^*(E)\rtimes_r G$.
\end{Thm}

Let $S$ be a countable inverse semigroup with zero and idempotent set $E$.  Recall~\cite{Exel} that the \emph{tight spectrum} $\wh E_{tight}$ of $E$ is the closure in $\wh E$ of the space of ultrafilters (where a filter $F$ is identified with its characteristic function, which is a semi-character, and an ultrafilter is a maximal proper filter).  This space is invariant under the action of $S$.  The associated groupoid of germs is denoted $\mathscr G_{tight}(S)$.  If $S$ is strongly $0$-$E$-unitary with universal group $G$, then the argument above shows that any invariant closed subspace for the action of $S$ is also invariant for $G$.  Thus we immediately obtain the following result.

\begin{Thm}
Let $S$ be a countable strongly $0$-$E$-unitary inverse semigroup $S$ with universal group $G$ and idempotent set $E$.  Then there is a partial action of $G$ on $\wh{E}_{tight}$ and $\mathscr G_{tight}(S)\cong G\ltimes \wh {E}_{tight}$; in particular, $\mathscr G_{tight}(S)$ is Hausdorff.  Consequently,  $C^*(\mathscr G_{tight}(S))\cong C_0(\wh E_{tight})\rtimes G$.
\end{Thm}

Since many of the classical $C^*$-algebras associated to \'etale groupoids come from groupoids of the form $\mathscr G_{tight}(S)$ with $S$ strongly $0$-$E$-unitary~\cite{Exel}, the theorem provides a uniform explanation for many of the partial action cross product results in this context.

\section{Morita equivalence with full cross products}
Let $\theta\colon G\to I_X$ be a partial action of a countable discrete group $G$ on a locally compact Hausdorff space $X$.  Define an equivalence relation on $G\times X$ by $(g,x)\sim (h,y)$ if and only if $x\in X_{g\inv h}$ and $h\inv gx=y$.  The class of $(g,x)$ will be denoted $[g,x]$.  The \emph{enveloping action}~\cite{abadie} or \emph{globalization}~\cite{Lawsonkellendonk} of the action consists of the quotient space $\til X= (G\times X)/{\sim}$ equipped with the $G$-action given by $g'[g,x] = [g'g,x]$.   Abadie proved the following result~\cite{abadie}.

\begin{Thm}\label{abadiesthm}
Suppose that $\theta\colon G\to I_X$ is a partial action of a countable discrete group $G$ on a locally compact Hausdorff space $X$ such that $\til X$ is Hausdorff.  Then $C_0(X)\rtimes G$ is strongly Morita equivalent to the full cross product $C_0(\til X)\rtimes G$.  A similar result holds for reduced $C^*$-algebras.
\end{Thm}

Khoshkam and Skandalis prove a more general result than that of Abadie in~\cite{Skandalis} using different terminology.   Let $G$ be a locally compact group and let $\mathscr G$ be a locally compact groupoid.  By a \emph{continuous cocycle}, they mean a continuous functor $\rho\colon \mathscr G\to G$.  They say the cocycle $\rho$ is:
\begin{itemize}
\item \emph{faithful} if the map $\mathscr G\to \mathscr G_0\times G\times \mathscr G_0$ given by $g\mapsto (r(g),\rho(g),d(g))$ is injective;
\item \emph{closed} if the map $g\mapsto (r(g),\rho(g),d(g))$ is closed;
\item \emph{transverse} if the map $G\times \mathscr G\to G\times \mathscr G_0$ given by $(\gamma,g)\mapsto (\gamma \rho(g),d(g))$ is open.
\end{itemize}

For example, if $G$ is a discrete group acting partially on a locally compact Hausdorff space $X$, then the projection $\rho\colon G\ltimes X\to G$ is a faithful, transverse, continuous cocycle.  The results of Abadie~\cite{abadie} imply that $\rho$ is closed if and only if the enveloping action is Hausdorff.  The implication that $\rho$ closed implies the enveloping action is Hausdorff is also proved in~\cite[Lemma 1.7]{Skandalis}.  It is easy to see that a transverse cocycle comes from a partial action if and only if the map $g\mapsto (\rho(g),d(g))$ is injective, which is stronger than the cocycle being faithful.

Suppose that we are in the more general situation of a faithful, continuous, transverse cocycle $\rho\colon \mathscr G\to G$.  Define the \emph{enveloping action} of $\rho$ as follows.  Let $Y = (G\times \mathscr G_0)/{\sim}$ where $(g,x)\sim (h,y)$ if there exists $\gamma\colon y\to x$ with $\rho(\gamma) = g\inv h$ (note that $\gamma$ is unique by faithfulness if it exists).  Write $[g,x]$ for the class of $(g,x)$ and impose the quotient topology on $Y$.  Notice that $G$ acts continuously on $Y$ by $g[h,x] = [gh,x]$. The following is a combination of Lemma~1.7 and Theorem~1.8 of~\cite{Skandalis}.

\begin{Thm}[Khoshkam and Skandalis]  Let $\rho\colon \mathscr G\to G$ be a faithful, continuous, transverse, closed cocycle.  Then the space $Y$ of the enveloping action is locally compact Hausdorff and there is a Morita equivalence of $\mathscr G$ with $G\ltimes Y$.  Thus $\mathscr G$ is Hausdorff, $C^*(\mathscr G)$ is strongly Morita equivalent to $C_0(Y)\rtimes G$ and $C^*_r(\mathscr G)$ is strongly Morita equivalent to $C_0(Y)\rtimes_r G$.
\end{Thm}

Khoshkam and Skandalis used this theorem to deduce that the $C^*$-algebras of certain $E$-unitary inverse semigroups are strongly Morita equivalent to cross products of a group with commutative $C^*$-algebras.  This has implications for strongly $0$-$E$-unitary inverse semigroups, as we shall see below.  Also their arguments can be made to work more generally.  We develop here the general theory for locally compact groupoids (see~\cite{Paterson} for the precise definition) and then use it to obtain conditions for a locally compact groupoid to be Morita equivalent to a cross product of an inverse semigroup with a commutative $C^*$-algebra.  Restricting to the case of a group recovers the results of Khoshkam and Skandalis.

Recall that if $\mathscr G$ is a topological groupoid, then an \emph{action} of $\mathscr G$ on a space $X$ consists of a continuous map $p\colon X\to \mathscr G_0$ and a continuous function $\theta\colon \mathscr G\times_{\mathscr G_0} X\to X$ (the pullback of $d$ and $p$), written $(g,x)\mapsto gx$, such that
\begin{itemize}
\item $p(x)x=x$;
\item $g(hx) = (gh)x$ if $d(h)=p(x)$ and $r(h)=d(g)$;
\item the diagram
\[\xymatrix{\mathscr G\times_{\mathscr G_0} X\ar[rr]^{\theta}\ar[rd]_{r\pi_1} & & X\ar[ld]^p\\ &\mathscr G_0 &}\] commutes where $\pi_1\colon \mathscr G\times_{\mathscr G_0} X\to \mathscr G$ is the projection.
\end{itemize}
In this case, we say that $X$ is a \emph{$\mathscr G$-space}.  There is an obvious notion of a morphism of $\mathscr G$-spaces.

Given an action $(p,\theta)$ of $\mathscr G$ on a space $X$, we can form the \emph{semidirect product} groupoid given by $\mathscr G\ltimes X = \mathscr G\times_{\mathscr G_0} X$.  Here, the units are the elements of the form $(p(x),x)$ and hence $(\mathscr G\ltimes X)_0$ can be identified with $X$, which we do from now on.  One has $d(g,x) = x$ and $r(g,x) = gx$.  The product is given by \[(g,hy)(h,y) = (gh,y)\] and the inverse by $(g,x)\inv = (g\inv ,gx)$.  It is not hard to verify that if $\mathscr G$ is \'etale, then so is $\mathscr G\ltimes X$.  Of course, this construction agrees with the previous construction of the semidirect product of a group with a space (or transformation groupoid) when $\mathscr G$ is a group.  There is a natural projection $\pi\colon \mathscr G\ltimes X\to \mathscr G$ given by projection to the first coordinate. Notice that if $x\in X$, then $r^{-1}(x)= \{(g,g\inv x)\mid r(g)=p(x)\}$, which is homeomorphic to $r^{-1}(p(x))$ via the projection $\pi$.  From this it is easy to see that if $\{\lambda^e\}_{e\in \mathscr G_0}$ is a left Haar system for $\mathscr G$, then we can define a left Haar system for $\mathscr G\ltimes X$ by putting $\mu^x(B) = \lambda^{p(x)}(\pi(B))$ for a Borel subset $B\subseteq r^{-1}(x)$.  The semidirect product is functorial in the space variable.

We need a few more definitions from topological groupoid theory~\cite{Moerdijkgroupoid,elephant1,elephant2}.  Let $\p\colon \mathscr G\to \mathscr H$ be a continuous functor of topological groupoids.   Then there is a resulting commutative diagram
\begin{equation}\label{definefaithfulfull}
\xymatrix{\mathscr G\ar@/^2pc/[rrd]^{\p}\ar@/_2pc/[ddr]_{r\times d}\ar@{-->}[rd]^{\ov{\p}}  & &\\
                     & \mathscr G_0\times \mathscr G_0\mathop{\times}\limits_{\mathscr H_0\times \mathscr H_0} \mathscr H\ar[r]\ar[d]  & \mathscr H\ar[d]^{r\times d}\\ & \mathscr G_0\times \mathscr G_0\ar[r]_{\p\times \p}  & \mathscr H_0\times \mathscr H_0}
\end{equation}
coming from the universal property of a pullback.  One says that $\p$ is \emph{faithful} if $\ov \p$ is a topological embedding in \eqref{definefaithfulfull}; it is \emph{full} if $\ov \p$ is an open surjection; and it is \emph{fully faithful} if $\ov \p$ is a homeomorphism.

A continuous functor $\p\colon \mathscr G\to \mathscr H$ is called \emph{essentially surjective} if the map $d\pi_2\colon \mathscr G_0\times_{\mathscr H_0} \mathscr H\to \mathscr H_0$ is an open surjection (where the pullback is over the $\p$ and $r$).   A fully faithful, essentially surjective functor is called a \emph{weak equivalence}. Two locally compact groupoids $\mathscr G,\mathscr H$ are said to be \emph{Morita equivalent} if there is a locally compact groupoid $\mathscr K$ and a diagram
\[\xymatrix{\mathscr K\ar[r]^{\psi}\ar[d]_
{\p}& \mathscr H\\ \mathscr G &}\] where $\p$ and $\psi$ are weak equivalences~\cite{elephant1,elephant2}.

A theorem of Renault~\cite{renaultdisintegration} (in the non-Hausdorff case), see also~\cite{muhlywilliams}, implies that if $\mathscr G$ and $\mathscr H$ are Morita equivalent locally compact groupoids, then $C^*(\mathscr G)$ and $C^*(\mathscr H)$ are strongly Morita equivalent, and similarly for $C^*_r(\mathscr G)$ and $C^*_r(\mathscr H)$.

The following is a generalization of the cocycle result of Khoshkam and Skandalis.  The factorization $\p = \pi\alpha$ is the groupoid realization of the hyperconnected-localic factorization of their classifying toposes~\cite{elephant1,elephant2}.

\begin{Thm}\label{cocyclethm}
Let $\p\colon \mathscr G\to \mathscr H$ be a faithful morphism of locally compact groupoids.  Suppose in addition that:
\begin{enumerate}
\item the image of $\ov {\p}$ from \eqref{definefaithfulfull} is closed (or equivalently in light of faithfulness, $\ov{\p}$ is a closed map);
\item the map $\psi\colon \mathscr H\times_{\mathscr H_0}\mathscr G\to  \mathscr H\times_{\mathscr H_0}\mathscr G_0$ given by $\psi(h,g) = (h\p(g),d(g))$ is open;
\item $\mathscr H_0\times_{\mathscr H_0} \mathscr G_0$ is an open subset of $\mathscr H\times_{\mathscr H_0} \mathscr G_0$.  (This is automatic if $\mathscr H$ is \'etale because $\mathscr H_0$ is open in $\mathscr H$.)
\end{enumerate}
Then there is a locally compact Hausdorff space $X$ and an action $(p,\theta)$ of $\mathscr H$ on $X$, called the \emph{enveloping action of $\p$}, such that $\p$ factors as
\[\xymatrix{\mathscr G\ar[rr]^{\alpha}\ar[rd]_{\p} & & \mathscr H\ltimes X\ar[ld]^{\pi}\\ & \mathscr H& }\] with $\alpha$ a weak equivalence.  Thus $\mathscr G$ is Morita equivalent to $\mathscr H\ltimes X$.
\end{Thm}
\begin{proof}
Let $Y=\mathscr H\times_{\mathscr H_0} \mathscr G_0$ (pullback of $d$ and $\p$) and define an equivalence relation on $Y$ by $(h,e)\sim (k,f)$ if there exists $g\colon e\to f$ such that $\p(g) = k\inv h$ (and so in particular $r(h)=r(k)$).  The equivalence class of $(h,e)$ is denoted $[h,e]$.  Let $X=Y/{\sim}$ and let $q\colon X\to Y$ be the quotient map.  Since $Y$ is locally compact, to prove that $X$ is locally compact Hausdorff, it suffices to show that $q$ is an open map and $\sim$ is a closed equivalence relation.

Let $U$ be an open subset of $X$; we must show $q\inv q(U)$ is open.  Let $V = \{(h,g)\in \mathscr H\times_{\mathscr H_0} \mathscr G\mid (h,r(g))\in U\}$; it is open by continuity of $r$.  From the definition of the equivalence relation $\sim$, we have $q\inv q(U) = \psi(V)$, which is open by (2).  Thus $q$ is an open map.  We can define a map \[\beta\colon Y\times Y\to \mathscr G_0\times \mathscr G_0\mathop{\times}\limits_{\mathscr H_0\times \mathscr H_0} \mathscr H\] by $((h,e),(k,f))\mapsto (f,e,k\inv h)$.  The definition of $\sim$ shows that its graph is precisely $\beta\inv (\ov{\p}(\mathscr G))$, and hence $\sim$ is a closed equivalence by (1).  We thus have that $X$ is a locally compact Hausdorff space.

Let us define an action of $\mathscr H$ on $X$.
There is a natural continuous map $p\colon X\to \mathscr H_0$ given by $p([h,e]) = r(h)$. We can define the action map $\theta\colon \mathscr H\times_{\mathscr H_0} X\to X$ by $h[k,e]=[hk,e]$ when $d(h)=r(k)$.  It is a routine exercise to verify that $(p,\theta)$ is indeed an action.  Let $\pi\colon \mathscr H\ltimes X\to \mathscr H$ be the projection.

Define $\alpha\colon \mathscr G\to \mathscr H\ltimes X$ by $\alpha(g) = (\p(g),[\p(d(g)),d(g)])$.  It is immediate that $\p=\pi\alpha$.  It is routine to verify that $\alpha$ is a continuous functor. Let us verify that it is a weak equivalence. First observe that $\alpha|_{\mathscr G_0}\colon \mathscr G_0\to X$ is an open map.  Indeed, as $q\colon Y\to X$ is open, it suffices to check that $e\mapsto (\p(e),e)$ is an open map $\mathscr G_0\to Y$.  But this map is a homeomorphism $\mathscr G_0\to \mathscr H_0\times_{\mathscr H_0} \mathscr G_0$ whose codomain is an open subset of $Y$ by (3).

First we check that $\alpha$ is essentially surjective.  This means that we need that the map $(e,(h,x))\mapsto x$ is an open surjection $\mathscr G_0\times_{X} (\mathscr H\ltimes X)\to X$.  Suppose that $[h,e]\in X$ with $d(h) = \p(e)$.  Then \[(h\inv,[h,e])\colon [h,e]\to [\p(e),e]=\alpha(e).\]  Thus $(e,(h\inv,[h,e]))\mapsto [h,e]$, establishing surjectivity.  Next we prove openness.   Recall that a groupoid is said to be \emph{open} if $d$ is open~\cite{Moerdijkgroupoid}.  Any locally compact groupoid is open due to the existence of a left Haar system.  We claim that if $\rho\colon \mathscr K\to \mathscr L$ is a continuous functor between open groupoids such that $\rho|_{\mathscr K_0}\colon \mathscr K_0\to \mathscr L_0$ is an open map, then $d\pi_2\colon \mathscr K_0\times_{\mathscr L_0} \mathscr L\to \mathscr L_0$ is open.  Indeed, since $d$ is open, it suffices to verify that $\pi_2$ is open.  But $\pi_2$ is open because open maps are stable under pullback.  Applying this to our setting (since $\alpha|_{\mathscr G_0}$ is open), we conclude that $\alpha$ is essentially surjective.

To prove that $\alpha$ is fully faithful, we need to show that $\ov{\alpha}$ defined by \[g\longmapsto (r(g),d(g),\alpha(g))\] is a homeomorphism of $\mathscr G$ with $\mathscr G_0\times \mathscr G_0\times_{X\times X} (\mathscr H\ltimes X)$. Since $\p=\pi\alpha$ is faithful, it follows that $\ov{\alpha}$ is injective and a topological embedding.  It is also surjective since if $(e,f,(h,[k,x]))$ is in the pullback, then $[\p(f),f] = [k,x]$ and $[\p(e),e]=[hk,x]$, whence there exists $g\colon x\to f$ with $k=\p(g)$ and $g'\colon x\to e$ with $\p(g')=hk$. Then $g'g\inv \colon f\to e$ satisfies $\p(g'g\inv) = hkk\inv = h$.  Thus $\ov{\alpha}(g'g\inv) = (e,f,(h,[\p(f),f]))=(e,f,(h,[k,x]))$.  This completes the proof that $\alpha$ is a weak equivalence.
\end{proof}

It is easy to see that~\cite[Theorem~1.8]{Skandalis} for discrete groups is the special case of the above theorem where $\mathscr H$ is a discrete group.  Probably assumption (3) is unnecessary, but we were unable to show that $d\pi_2$ is open without it.

Our goal is to apply Theorem~\ref{cocyclethm} when $\mathscr H=\mathscr G(S)$ for a countable inverse semigroup $S$.    First we want to relate $\mathscr G(S)$-spaces to a class of $S$-spaces.  As usual $E$ denotes the semilattice of idempotents of $S$.  By an \emph{$S$-space}, we mean a pair $(X,\theta)$ where $X$ is a space and $\theta\colon S\to I_X$ is a homomorphism such that $X= \bigcup_{e\in E} X_e$ where $X_e$ is the domain of $\theta(e)$ for $e\in E$.  The $S$-spaces form a category where a morphism $\p\colon X\to Y$ of $S$-spaces is a continuous map such that:
\begin{itemize}
\item $\p\inv (Y_e)=X_e$ for all $e\in E$;
\item $\p(sx) = s\p(x)$ for $x\in X_{s^*s}$.
\end{itemize}

We say that $X$ is a \emph{special} $S$-space if $X_e$ is clopen for all $e\in E$.  For instance, $\wh E$ is a special $S$-space.  The reader is referred to~\cite{Exel} for basic definitions and notions concerning groupoids of germs.

\begin{Thm}\label{equiv}
The categories of $\mathscr G(S)$-spaces and special $S$-spaces are isomorphic.  More, precisely each special $S$-space $X$ has the structure of a $\mathscr G(S)$-space and vice versa.  Moreover, the groupoid of germs $S\ltimes X$ of the action of $S$ on $X$ is topologically isomorphic to the semidirect product $\mathscr G(S)\ltimes X$.
\end{Thm}
\begin{proof}
We just verify the isomorphism at the level of objects; the easy verifications of the details for morphisms are omitted.
First let $X$ be a $\mathscr G(S)$-space; assume that the action is given by $(p,\theta)$ where $p\colon X\to \wh E$.  Define an action of $S$ on $X$ as follows.  Let $X_e = p\inv (D(e))$.  Then since $D(e)$ is clopen in $\wh E$, it follows that $X_e$ is clopen.  Also $X = p\inv (\wh E) = \bigcup_{e\in E} X_e$ since the $D(e)$ cover $\wh E$.  Define, for $s\in S$, a continuous map $\rho_s\colon X_{s^*s}\to X_{ss^*}$ by $\rho_s(x) = [s,p(x)]x$.  This map is well defined since $x\in X_{s^*s}$ implies that $p(x)\in D(s^*s)$ and so $[s,p(x)]\in \mathscr G(S)$.  Continuity is clear from continuity of $p$ and the action of $\mathscr G(S)$.  Also $\rho_{s^*}\rho_s(x) = [s^*,sp(x)][s,p(x)]x=[s^*s,p(x)]x=x$ and so $\rho_s$ and $\rho_{s^*}$ are inverse homeomorphisms.

Next we check that $\rho\colon S\to I_X$ given by $\rho(s)=\rho_s$ is a homomorphism.  First of all we have \[X_{ef} = p\inv (D(ef)) = p\inv (D(e)\cap D(f)) = p\inv (D(e))\cap p\inv (D(f))= X_e\cap X_f.\] Secondly, if $s\leq t$ and $x\in X_{s^*s}$, then $p(x)\in D(s^*s)\subseteq D(t^*t)$ and so $[s,p(x)]=[t,p(x)]$, whence $\rho_s\leq \rho_t$.  Thus $\rho$ is order preserving.
Thus to check that $\rho$ is a homomorphism, we just need to show that $\rho_s\rho_t=\rho_{st}$ whenever $s^*s=tt^*$ (cf.~\cite[Chapter~3, Theorem~5]{Lawson}).  In this case, $\rho_s\rho_t(x) = \rho_s([t,p(x)]x)= [s,tp(x)][t,p(x)]x = [st,p(x)]x = \rho_{st}(x)$.  Thus $\rho$ is a special action.

We construct an isomorphism $S\ltimes X\to \mathscr G(S)\ltimes X$ by $[s,x]\mapsto ([s,p(x)],x)$ with inverse $([s,p(x)],x)\mapsto [s,x]$.  It is routine to verify that these maps are inverse continuous functors.

Next suppose that $\rho\colon S\to I_X$ gives a special action of $S$ on $X$.  Define $p\colon X\to \wh E$ by \[p(x)(e) = \begin{cases} 1 & x\in X_e\\ 0 & x\notin X_e\end{cases}.\]  It is easy to see that $p(x)\in \wh E$.  The fact that $p(x)$ is a semilattice homomorphism is trivial since $X_e\cap X_f=X_{ef}$.  It is non-zero exactly because $x\in X_e$ for some $e\in E$.  To show that $p$ is continuous, it suffices to show that $p\inv(D(e))$ is clopen for all $e\in E$. But $p(x)\in D(e)$ if and only if $x\in X_e$ and $X_e$ is clopen by definition of a special action.  Define the action of $\mathscr G(S)$ on $X$ by $[s,p(x)]x = sx$.  We must show that this is well defined.  First of all $p(x)\in D(s^*s)$ implies that $x\in X_{s^*s}$ and so $sx$ is defined.  Also if $[t,p(x)]=[s,p(x)]$, then we can find $u\leq s,t$ with $p(x)\in D(u^*u)$.  But then $x\in X_{u^*u}$ and so $sx=ux=tx$.  Thus the action is well defined.  Let us verify continuity.  Suppose $p(x)\in D(s^*s)$ and
$U$ is a neighborhood of $sx$.  Then we can find a neighborhood $V\subseteq X_{s^*s}$ so that $x\in V$ and $sV\subseteq U$.  Then $(\mathscr G(S)\times_{\wh E} X)\cap ((s,D(s^*s))\times V)$ is a neighborhood of $([s,p(x)],x)$ mapped by the action into $U$.

We omit the straightforward verification that these two constructions are inverse to each other.
\end{proof}

The reader should observe that Paterson's universal property of $\mathscr G(S)$~\cite{Paterson} is an immediate consequence of this result.

Let $\mathscr G$ be a locally compact groupoid and $S$ a countable inverse semigroup.  We define a \emph{continuous, faithful, transverse, closed cocycle} $\p\colon \mathscr G\to S$ to be a continuous faithful functor $\p\colon \mathscr G\to \mathscr G(S)$ satisfying (1) and (2) of Theorem~\ref{cocyclethm} (condition (3) being automatic as $\mathscr G(S)$ is \'etale). The following result generalizes~\cite[Theorem~1.8]{Skandalis} to inverse semigroups.  If $S$ is an inverse semigroup acting on a locally compact Hausdorff space $X$, then one can form the cross-product $C^*$-algebra $C_0(X)\rtimes S$~\cite[Section~9]{Exel}.  Moreover, $C_0(X)\rtimes S\cong C^*(S\ltimes X)$ by~\cite[Theorem~9.8]{Exel}.  Exel does not speak of reduced inverse semigroup cross-products, but one would presume the analogous result holds.  Theorems~\ref{cocyclethm} and~\ref{equiv} have the following corollary.

\begin{Cor}\label{invcocyclecor}
Let $\rho\colon \mathscr G\to S$  be a continuous, faithful, transverse, closed cocycle where $\mathscr G$ is a locally compact groupoid and $S$ is a countable inverse semigroup.  Then there is a locally compact Hausdorff space $X$ equipped with an action by $S$ so that $\mathscr G$ is Morita equivalent to the groupoid of germs $S\ltimes X$.  Consequently, $C^*(\mathscr G)$ is strongly Morita equivalent to $C_0(X)\rtimes S$.  An analogous result holds for reduced $C^*$-algebras if $S$ is a group.
\end{Cor}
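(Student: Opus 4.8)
The plan is to assemble the corollary directly from Theorem~\ref{cocyclethm} and Theorem~\ref{equiv}, using Renault's disintegration theorem to descend from groupoids to $C^*$-algebras. First I would unwind the definition of a continuous, faithful, transverse, closed cocycle $\rho\colon \mathscr G\to S$ introduced just above the statement: by definition this is precisely a continuous faithful functor $\rho\colon \mathscr G\to \mathscr G(S)$ satisfying hypotheses (1) and (2) of Theorem~\ref{cocyclethm}, with hypothesis (3) holding automatically because $\mathscr G(S)$ is \'etale (so $\mathscr G(S)_0$ is open in $\mathscr G(S)$). Hence Theorem~\ref{cocyclethm} applies verbatim with $\mathscr H=\mathscr G(S)$.

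Applying Theorem~\ref{cocyclethm} then produces a locally compact Hausdorff space $X$ carrying an action $(p,\theta)$ of $\mathscr G(S)$, together with a weak equivalence $\alpha\colon \mathscr G\to \mathscr G(S)\ltimes X$ factoring $\rho$ through the projection $\pi$; in particular $\mathscr G$ is Morita equivalent to the semidirect product $\mathscr G(S)\ltimes X$. To convert this into an inverse semigroup action I would invoke Theorem~\ref{equiv}: the $\mathscr G(S)$-space $X$ is canonically a special $S$-space (with $X_e=p\inv(D(e))$ clopen), and the groupoid of germs $S\ltimes X$ of the resulting $S$-action is topologically isomorphic to $\mathscr G(S)\ltimes X$. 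Composing the two Morita equivalences shows that $\mathscr G$ is Morita equivalent to $S\ltimes X$, which is the first assertion.

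Finally I would pass to operator algebras. Renault's theorem (cited just before Theorem~\ref{cocyclethm}) shows that Morita equivalent locally compact groupoids have strongly Morita equivalent full $C^*$-algebras, so $C^*(\mathscr G)$ is strongly Morita equivalent to $C^*(S\ltimes X)$; since $C^*(S\ltimes X)\cong C_0(X)\rtimes S$ by~\cite[Theorem~9.8]{Exel}, the full-algebra conclusion follows. For the reduced statement I would specialize to $S=G$ a group, where $E(S)=\{1\}$ forces $\wh E$ to be a single point and collapses $\mathscr G(S)$ to $G$ regarded as a one-object groupoid; then the enveloping action is a genuine $G$-action, $S\ltimes X=G\ltimes X$ is the ordinary transformation groupoid, and the reduced half of Renault's theorem combined with the classical identification $C^*_r(G\ltimes X)\cong C_0(X)\rtimes_r G$ gives the reduced conclusion.

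Thus the corollary is essentially a formal consequence of the two main theorems, and the genuine content resides in Theorem~\ref{cocyclethm} and Theorem~\ref{equiv} rather than here. I expect the only points requiring any care to be the observation that hypothesis (3) is automatic and that the enveloping $\mathscr G(S)$-space lies in the scope of Theorem~\ref{equiv}; both are immediate from \'etaleness of $\mathscr G(S)$ and clopenness of the basic sets $D(e)$. The one genuine limitation is the reduced case: because reduced crossed products of inverse semigroups are not developed in~\cite{Exel}, one cannot simply substitute $C^*_r$ for $C^*$ throughout, and the reduced assertion must be restricted to groups, where $\mathscr G(S)$ degenerates and the reduced crossed-product identification is standard.
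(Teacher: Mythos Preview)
Your proposal is correct and follows exactly the paper's approach: the corollary is stated without a separate proof, the paper simply noting that it follows from Theorems~\ref{cocyclethm} and~\ref{equiv} together with the identification $C_0(X)\rtimes S\cong C^*(S\ltimes X)$ from~\cite[Theorem~9.8]{Exel} and Renault's Morita theorem. Your unpacking of why hypothesis~(3) is automatic and why the reduced case must be restricted to groups matches the paper's remarks verbatim.
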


Our next goal is to apply this result to inverse semigroups, in particular to strongly $0$-$E$-unitary inverse semigroups.  First we address a question that does not seem to have been satisfactorily answered in the literature: the functoriality of the construction $S\mapsto \mathscr G(S)$. If $\p\colon E\to F$ is a semilattice homomorphism, then there is obviously a continuous map $\p\colon \wh F\to \wh E$ induced by precomposition with $\p$.  However, it turns out that under certain circumstances there is also a continuous map $\wh E\to \wh F$.

A map of topological spaces $f\colon X\to Y$ is said to be \emph{coherent} if, for each quasi-compact open subset $U$ of $Y$, one has that $f\inv(U)$ is quasi-compact open~\cite{johnstone}.  It is natural to say that $f$ is \emph{locally coherent} if, for each $x\in X$, there is a neighborhood $U$ of $x$ so that $f|_U\colon U\to Y$ is coherent.

A poset $P$ is naturally a $T_0$ topological space via the \emph{Alexandrov topology}.  The open sets are the downsets where we recall that a \emph{downset}  is a subset $X\subseteq P$ such that $y\leq x$ and $x\in X$ implies $y\in X$.  The continuous maps between posets are precisely the order preserving ones.  The quasi-compact open subsets are easily verified to be the finitely generated downsets and so a map of posets $f\colon P\to Q$ is coherent if and only if the preimage of any finitely generated downset is finitely generated.  If $p\in P$, then $p^{\downarrow}$ denotes the \emph{principal downset} generated by $p$.

Recall that a \emph{filter} $F$ on a poset $P$ is a non-empty upset (defined dually to downsets) such that any two elements of $F$ have a common lower bound.   For a semilattice, filters are precisely the non-empty subsemigroups which are upsets.  In general, if $B$ is a non-empty subsemilattice of $E$, then the up-closure $B^{\uparrow} = \{x\in E\mid \exists f\in B, x\geq f\}$ is a filter.  It is easy to see that the semi-characters of a semilattice $E$ are in bijection with the filters via the correspondence $\p\mapsto \p\inv(1)$ and $F\mapsto \chi_F$.  Thus we can identify $\wh E$ with the space of filters on $E$.  The compact open $D(e)$ corresponds to those filters containing $e$.

Let $\p\colon E_1\to E_2$ be a semilattice homomorphism.  We can define a map $\wh \p\colon \wh E_1\to \wh E_2$ by $\wh \p(F) = \p(F)^{\uparrow}$.  This is well defined since $\p(F)$ is a non-empty subsemilattice.  The question is: when is the map $\wh \p$ continuous?  The answer when $E$ has a maximum is well known~\cite{johnstone} and the adaptation for the general case is not difficult.

\begin{Prop}\label{continuity}
Let $\p\colon E_1\to E_2$ be a semilattice homomorphism.  Then $\wh \p\colon \wh E_1\to \wh E_2$ is continuous if and only if $\p$ is locally coherent.
\end{Prop}
\begin{proof}
Suppose first that $\p$ is locally coherent.  To establish continuity it suffices to show that $\wh \p\inv (D(e))$ is clopen for $e\in E_2$.  We claim that \[\wh \p\inv (D(e)) = \bigcup_{f\in \pinv(e^{\downarrow})}D(f),\] and hence is open.  Indeed, if $\p(f)\leq e$ and $f\in F$ where $F$ is a filter, then $e\in \p(F)^{\uparrow}=\wh \p(F)$ and so $\wh \p(F)\in D(e)$.  Conversely, if $\wh \p(F)\in D(e)$, then there exists $x\in \p(F)$ with $e\geq x$.  If $x=\p(f)$ with $f\in F$, then $f\in \pinv(e^{\downarrow})$ and so $F\in D(f)$.  Note that so far we have not used local coherence.

Since $\wh E_1$ is covered by the compact open sets $D(x)$ with $x\in E_1$, to establish that $\wh \p\inv (D(e))$ is closed it suffices to show that $D(x)\cap \wh \p\inv(D(e))$ is closed for each $x\in E_1$.  That is, it suffices to show that $\bigcup_{f\in \pinv(e^{\downarrow})\cap x^{\downarrow}}D(f)$ is closed.  Let $X$ be a downset of $E_1$ containing $x$ such that $\p|_X$ is coherent.  Then there exists $x_1,\ldots, x_n\in X\cap \p\inv(e^{\downarrow})$ such that $f\in X\cap \pinv(e^{\downarrow})$ if and only if $f\leq x_i$ for some $i$.  So if $f\in \pinv(e^{\downarrow})\cap x^{\downarrow}$, then one has $f=fx\leq x_ix$ for some $i$.  Also $x_jx\in \pinv(e^{\downarrow})\cap x^{\downarrow}$ for all $j$.  As $f\leq x_ix$ implies $D(f)\subseteq D(x_ix)$ it follows that \[\bigcup_{f\in \pinv(e^{\downarrow})\cap x^{\downarrow}}D(f)= \bigcup_{i=1}^nD(x_ix)\] and hence is closed. This yields the continuity of $\wh \p$.

Suppose next that $\wh \p$ is continuous.  We claim that, for each $x\in E_1$, one has that $\p|_{x^{\downarrow}}\colon x^{\downarrow}\to E_2$ is coherent.  Clearly it suffices to show that if $e\in E_2$, then $\p\inv (e^{\downarrow})\cap x^{\downarrow}$ is finitely generated as a downset.  By continuity $\wh \p\inv (D(e))$ is a clopen subset of $\wh E_1$ and hence $K=D(x)\cap \wh \p\inv (D(e))$ is compact.  The arguments in the previous paragraph yield \[K = \bigcup_{f\in \pinv(e^{\downarrow})\cap x^{\downarrow}} D(f)\] and so there exist $x_1,\ldots, x_n\in \pinv(e^{\downarrow})\cap x^{\downarrow}$ such that $f\in \pinv(e^{\downarrow})\cap x^{\downarrow}$ implies $D(f)\subseteq D(x_1)\cup\cdots \cup D(x_n)$.  Let $f^{\uparrow}$ be the principal filter generated by $f$.  If $f^{\uparrow}\in D(x_i)$, then $f\leq x_i$.  Thus $x_1,\ldots, x_n$ generate $\pinv(e^{\downarrow})\cap x^{\downarrow}$ as a downset.  We conclude that $\p$ is locally coherent.
\end{proof}

The proof of Proposition~\ref{continuity} shows that $\p\colon E_1\to E_2$ is locally coherent if and only if $\p|_{e^\downarrow}\colon e^\downarrow\to E_2$ is coherent for each $e\in E_1$.  We shall use this later.

Note that local coherence is not automatic.  Let $E_1$ consist of a top $1$, a bottom $0$ and an infinite anti-chain $X$ and let $E_2=\{0,1\}$.  Let $\p\colon E_1\to E_2$ send the top of $E_1$ to $1$ and all remaining elements to $0$.  Since $E_1$ has a top, the remark above shows that in order for $\p$ to be locally coherent, it must be coherent.  But $\p\inv (0^{\downarrow})$ is not finitely generated as a downset.

\begin{Prop}\label{locallycoherentrestricts}
Let $\p\colon S\to T$ be a locally coherent homomorphism of inverse semigroups.  Then $\p|_{E(S)}\colon E(S)\to E(T)$ is locally coherent.
\end{Prop}
\begin{proof}
Put $\psi=\p|_{E(S)}$ and suppose that $\p$ is locally coherent. Let $e\in E(S)$ and choose a downset $X$ of $S$ containing $e$ so that $\p|_X\colon X\to T$ is coherent.  Let $f\in E(T)$ and suppose that $s_1,\ldots, s_m$ generate $X\cap \pinv(f^{\downarrow})$ as a downset.  Set $e_i =es_i^*s_i$.  Since $\p(s_i)\leq f$, we have $\p(e_i)\leq \p(s_i^*s_i)\leq f$ and so the $e_i$ belong to $\psi\inv(f^{\downarrow})\cap e^{\downarrow}$.  If $x\in \psi\inv(f^{\downarrow})\cap e^{\downarrow}$, then $x\leq e$ implies $x\in X$.  Therefore, $x\leq s_i$ for some $i$.  But then $x\leq e_i$.  It follows that $\psi\inv(f^{\downarrow})\cap e^{\downarrow}$ is generated by $e_1,\ldots, e_m$ and so $\psi$ is locally coherent.
\end{proof}

The converse of the above proposition is false.  Let $E_1$ be the semilattice constructed before the proposition and let $T=E_1\cup \{z\}$ where $z^2=1$ and $za=a=az$ for $a\in X\cup \{0\}$.  Then $E_1=E(T)$ and hence the inclusion $\iota$ clearly satisfies $\iota\colon E_1\to E(T)$ is locally coherent.  However, $\iota\colon E_1\to T$ is not locally coherent.  If it were, then since $E_1$ has a maximum, it would have to be coherent.  But it is not since $\iota\inv(z^{\downarrow}) = X\cup \{0\}$, which is not finitely generated as a downset.  This example can be generalized, using~\cite[Theorem~5.17]{discretegroupoid} to show that if $T$ is an inverse semigroup such that $\mathscr G(T)$ is not Hausdorff, then the inclusion $\iota\colon E(T)\to T$ satisfies $\iota\colon E(T)\to E(T)$ is locally coherent but $\iota\colon E(T)\to T$ is not.

Our next goal is to show that Paterson's universal groupoid construction is functorial if one restricts to inverse semigroup morphisms $\p\colon S\to T$ such that $\p|_{E(S)}$ is locally coherent.

\begin{Thm}\label{functorialityofG}
Let $\p\colon S\to T$ be a homomorphism of inverse semigroups such that $\p|_{E(S)}$ is locally coherent.  Then there is a continuous homomorphism $\Phi\colon \mathscr G(S)\to \mathscr G(T)$ given by $[s,F]\mapsto [\p(s),\wh \p(F)]$ where $F$ is a filter on $E(S)$ with $s^*s\in F$.
\end{Thm}
\begin{proof}
Let us first show that $\Phi$ is well defined.  If $[s,F]=[t,F]$, then there exists $u\leq s,t$ with $u^*u\in F$.  Then $\p(u)\leq \p(s),\p(t)$ and $\p(u)^*\p(u)\in \p(F)^{\uparrow}=\wh \p(F)$.  Thus $[\p(s),\wh \p(F)]=[\p(t),\wh \p(F)]$.  To verify continuity, let $[s,F]\in \mathscr G(S)$ and suppose that $(t,U)$ is a basic neighborhood of $[\p(s),\wh \p(F)]$.  Then we can find $u\leq \p(s),t$ such that $u^*u\in \wh \p(F)$.  Thus there exists $e\in F$ with $\p(e)\leq u^*u$.  Consider the neighborhood \[W=(s,D(s^*s)\cap D(e)\cap \wh \p\inv (U))\] of $[s,F]$ in $\mathscr G(S)$.  If $[s,F']\in W$, then $\Phi([s,F']) = [\p(s),\wh\p(F')]$ satisfies $\wh\p(F')\in U$.  Also $e\in F'$ implies that $u^*u\in \wh\p(F')$.  It follows that $[\p(s),\wh \p(F')]= [t,\wh \p(F')]\in (t,U)$.  Thus $\Phi$ is continuous.

To verify that $\Phi$ is a functor, first observe that on objects, $\Phi(F) = \wh \p(F)$ and so $\Phi(d([s,F])) = \wh\p(F)=d([\p(s),\wh\p(F)])=d(\Phi([s,F]))$. On the other hand, $\Phi(r([s,F])) = \wh \p(sF)$, whereas $r(\Phi([s,F])) = s\wh \p(F)$.  Thus we must show that $\wh{\p}(sF)=\p(s)\wh \p(F)$.

If $e\in \wh \p(sF)$, then there exists $x\in sF$ with $\p(x)\leq e$.  But then $s^*xs\in F$ and $\p(s^*xs)\leq \p(s)^*e\p(s)$.  This shows that $\p(s)^*e\p(s)\in \wh \p(F)$ and hence $e\in \p(s)\wh \p(F)$.  Conversely, if $e\in \p(s)\wh \p(F)$, then $\p(s)^*e\p(s)\in \wh \p(F)$ and so there exists $x\in F$ with $\p(x)\leq \p(s)^*e\p(s)$. But then $e\geq \p(sxs^*)$ and $s^*(sxs^*)s\in F$ because $s^*s\in F$.  Thus $sxs^*\in sF$ and so $e\in \wh \p(sF)$.  We conclude $\p(s)\wh \p(F)=\wh \p(sF)$.

Then remaining verification that $\Phi$ is a functor is the computation
\begin{align*}
\Phi([s',sF])\Phi([s,F]) &= [\p(s'),\wh{\p}(sF)][\p(s),\wh \p(F)]= [\p(s's),\wh \p(F)]\\ &= \Phi([s's,F]).
\end{align*}
This completes the proof.
\end{proof}

We introduce a condition, called the \emph{Khoshkam-Skandalis} condition (or \emph{KS condition} for short), on an inverse semigroup homomorphism $\p\colon S\to T$ that guarantees that Theorem~\ref{cocyclethm} applies to the induced homomorphism $\Phi\colon \mathscr G(S)\to \mathscr G(T)$.

\begin{Def}[KS condition]
Let $\p\colon S\to T$ be an inverse semigroup homomorphism.  Then $\p$ is said to satisfy the \emph{KS condition} if, for all $e,f\in E(S)$, one has $\p|_{eSf}\colon eSf\to T$ is coherent.
\end{Def}

The KS condition on $\p$ implies that it is locally coherent and hence the restriction of $\p$ to $E(S)$ is locally coherent by Proposition~\ref{locallycoherentrestricts}.  Thus $\Phi\colon \mathscr G(S)\to \mathscr G(T)$ can be defined as per Theorem~\ref{functorialityofG}.  The KS condition was considered by Khoshkam and Skandalis for the special case of the maximal group image homomorphism $\sigma\colon S\to G$.

In the paper~\cite{InvExp}, a semigroup homomorphism $\p\colon S\to T$ was defined to be an \emph{$F$-morphism} if $\p\inv (t)$ has a maximum element for each $t\in T$.  For instance, $S$ is an \emph{$F$-inverse monoid} if and only if $\sigma\colon S\to G$ is an $F$-morphism.  The original motivation for considering $F$-morphisms was to study partial actions of inverse semigroups. We claim that an $F$-morphism satisfies the KS condition.

\begin{Prop}
An $F$-morphism satisfies the KS condition.
\end{Prop}
\begin{proof}
Let $\p\colon S\to T$ be an $F$-morphism and suppose $e,f\in E(S)$.  Let $t\in T$ and suppose $u$ is the maximum element in $\pinv (t)$.  Then we have $\pinv(t^{\downarrow})\cap eSf = (euf)^{\downarrow}$.  Indeed, $\p(euf)\leq t$ and $euf\in eSf$.  If $s\in eSf$ and $\p(s)\leq t$, then $s\leq u$ and so $s=esf\leq euf$.  This shows that $\p|_{eSf}$ is coherent.
\end{proof}

We now prove a series of lemmas that will lead to our main result on Morita equivalence.

\begin{Lemma}\label{closed}
Let $\p\colon S\to T$ be an inverse semigroup homomorphism satisfying the KS condition.  Abusing notation, we write $\p\colon \mathscr G(S)\to \mathscr G(T)$ for the induced morphism.   Let
\begin{equation}\label{uselesslabel}
\psi\colon \mathscr G(S)\to \mathscr G(S)_0\times \mathscr G(S)_0\times \mathscr G(T)
\end{equation}
be given by $\psi(g) = (r(g),d(g),\p(g))$.  Then $\psi$ is a closed map.
\end{Lemma}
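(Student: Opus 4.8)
The plan is to prove closedness directly by a net argument, showing that when $C\subseteq \mathscr G(S)$ is closed, every point of $\overline{\psi(C)}$ already lies in $\psi(C)$. The first observation is that $\psi(\mathscr G(S))$ is contained in the pullback $P=\{(\mathsf F,\mathsf G,\gamma)\mid \wh\p(\mathsf F)=r(\gamma),\ \wh\p(\mathsf G)=d(\gamma)\}$, which is exactly the identity $\wh\p(sF)=\p(s)\wh\p(F)$ established in Theorem~\ref{functorialityofG}. Since the KS condition makes $\p|_{E(S)}$ locally coherent (Proposition~\ref{locallycoherentrestricts}), the map $\wh\p\colon \wh{E(S)}\to \wh{E(T)}$ is continuous by Proposition~\ref{continuity}; as $\wh{E(T)}$ is Hausdorff, $P$ is the equalizer of two continuous maps into a Hausdorff space and hence closed. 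Thus any point of $\overline{\psi(C)}$ has the form $y=(\mathsf F,\mathsf G,[t,H])$ with $\wh\p(\mathsf G)=H$ and $\wh\p(\mathsf F)=tH$, and I only need to produce $g\in C$ with $\psi(g)=y$.

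Next I would index a net by the product basic neighborhoods $N=N_1\times N_2\times(t,U)$ of $y$, directed by reverse inclusion, choosing $g_N=[s_N,F_N]\in C$ with $\psi(g_N)\in N$. Fix once and for all $e\in\mathsf F$ and $f\in\mathsf G$ with $\p(e)\le tt^*$ and $\p(f)\le t^*t$ (these exist because $tt^*\in tH=\wh\p(\mathsf F)$ and $t^*t\in H=\wh\p(\mathsf G)$), and restrict the net to those $N$ with $N_1\subseteq D(e)$ and $N_2\subseteq D(f)$, which is cofinal. For such $N$ the condition $\psi(g_N)\in N$ unwinds to $s_N^*es_N,f\in F_N$ together with the existence of a common lower bound $w_N$ of $\p(s_N)$ and $t$ satisfying $w_N^*w_N\in\wh\p(F_N)$.

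The crux is to normalize the representative $s_N$. The germ condition on the third coordinate gives only a \emph{common lower bound} of $\p(s_N)$ and $t$, not the inequality $\p(s_N)\le t$, so coherence cannot be applied yet. I would replace $s_N$ by $\tilde s_N=es_Nh_N$, where $h_N\in F_N$ is the idempotent obtained as the product of $f$, of a witness $f_N'\in F_N$ with $\p(f_N')\le w_N^*w_N$, and of $s_N^*es_N$. A short computation then shows $[\tilde s_N,F_N]=[s_N,F_N]=g_N$, that $\tilde s_N\in eSf$, and that $\p(\tilde s_N)\le w_N\le t$. Now the KS condition applies: $\p|_{eSf}\colon eSf\to T$ is coherent, so $\{x\in eSf\mid \p(x)\le t\}$ is a \emph{finitely} generated downset, say with generators $x_1,\dots,x_n$. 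Hence $\tilde s_N\le x_i$ for some $i$, and therefore $g_N=[x_i,F_N]$ for one of finitely many $x_i$.

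Finally I would exploit this finiteness to pass to a cofinal subnet on which the index is constant, say $g_N=[x_{i_0},F_N]$. Since $F_N=d(g_N)\to\mathsf G$ and $D(x_{i_0}^*x_{i_0})$ is clopen, we get $x_{i_0}^*x_{i_0}\in\mathsf G$, so $g:=[x_{i_0},\mathsf G]$ is a well-defined element of $\mathscr G(S)$ and $g_N\to g$; as $C$ is closed, $g\in C$. It then remains to check $\psi(g)=y$: the second coordinate is $\mathsf G$ by construction; continuity of $r$ with $r(g_N)\to\mathsf F$ and Hausdorffness of $\wh{E(S)}$ forces $x_{i_0}\mathsf G=\mathsf F$; and $\p(x_{i_0})\le t$ together with $\p(x_{i_0})^*\p(x_{i_0})\in H$ yields $[\p(x_{i_0}),H]=[t,H]$. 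The main obstacle is precisely the normalization step of the third paragraph --- converting the ``common lower bound'' germ data into the honest inequality $\p(\tilde s_N)\le t$ needed to invoke coherence, while keeping the germ unchanged and staying inside the fixed $eSf$ --- and this is exactly where the finiteness supplied by the KS condition is indispensable, since it is what allows the pigeonhole argument to select a single limiting representative.
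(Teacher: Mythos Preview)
Your argument is correct and rests on the same core idea as the paper: fix $e,f,t$, use the KS condition to produce a finite generating set $x_1,\dots,x_n$ for $eSf\cap\p^{-1}(t^{\downarrow})$, and normalize an arbitrary germ representative so that it lies under some $x_i$. Your normalization $\tilde s_N=es_Nh_N$ with $h_N=f\cdot f_N'\cdot s_N^*es_N$ does exactly what is needed (your checks that $[\tilde s_N,F_N]=[s_N,F_N]$, $\tilde s_N\in eSf$, and $\p(\tilde s_N)\le w_N\le t$ all go through), and the pigeonhole/subnet step is legitimate for finitely many colors over a directed index set.

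The paper packages the same normalization differently: instead of nets, it shows that for each compact open box $D(e)\times D(f)\times (t,D(t^*t))$ one has
\[
\psi(X)\cap\bigl(D(e)\times D(f)\times (t,D(t^*t))\bigr)=\psi\Bigl(X\cap\bigcup_{i=1}^n (s_i,D(s_i^*s_i))\Bigr),
\]
and then observes that the right side is compact, hence closed in the box. This avoids both the net machinery and your preliminary observation about the closed pullback $P$, at the cost of proving an explicit set equality. Your route has the virtue of making the limiting element $g=[x_{i_0},\mathsf G]$ visible; the paper's route is shorter and sidesteps any delicacy about non-Hausdorffness of $\mathscr G(S)$ or $\mathscr G(T)$ by reducing everything to compact Hausdorff pieces.
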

\begin{proof}
Let $X\subseteq \mathscr G(S)$ be closed.  Since the sets of the form \[D(e)\times D(f)\times (t,D(t^*t))\] with $e,f\in E(S)$ and $t\in T$ form a cover of the right hand side of \eqref{uselesslabel} by compact open sets, it suffices to show $\psi(X)\cap (D(e)\times D(f)\times (t,D(t^*t)))$ is closed in $D(e)\times D(f)\times (t,D(t^*t))$ for all choices of $e,f,t$.  Let $s_1,\ldots,s_n$ be a finite generating set for $eSf\cap \pinv(t^{\downarrow})$.

\begin{claim}
One has that
\[\psi(X)\cap (D(e)\times D(f)\times (t,D(t^*t))) = \psi\left(X\cap \left(\bigcup_{i=1}^n(s_i,D(s_i^*s_i))\right)\right)\] holds.
\end{claim}
\begin{proof}[Proof of claim]
Suppose that $[s,F]\in X\cap (s_i,D(s_i^*s_i))$.  Without loss of generality, we may assume $s\leq s_i$.  Then $s^*s\leq s_i^*s_i\leq f$.  Thus $F\in D(f)$.  Similarly, $ss^*\leq s_is_i^*\leq e$ and so $s^*es\geq  s^*s\in F$, yielding $e\in sF$.  Thus to show that $\psi([s,F])$ belongs to the left hand side, it remains to show that $[\p(s),\wh \p (F)]\in (t,D(t^*t))$.  First observe that $\p(s)\leq \p(s_i)\leq t$.  Also $t^*t\geq \p(s^*s)\in \p(F)$ and so $t^*t\in \wh\p (F)$.  Thus $[\p(s),\wh \p(F)] = [t,\wh \p(F)]\in (t, D(t^*t))$.  We conclude $\psi([s,F])$ belongs to the left hand side.

Next suppose that $[s,F]\in X$ with $\psi([s,F])$ in the left hand side. Then $f\in F$ and $e\in sF$, whence $s^*es\in F$.  Thus $(esf)^*esf =fs^*esf\in F$.  Therefore $[s,F]=[esf,F]$ and so we may assume without loss of generality that $s\in eSf$.  Next, since $[\p(s),\wh \p (F)]\in (t,D(t^*t))$, it follows that there exists $u\in T$ with $u\leq \p(s),t$ and $u^*u\in \wh \p(F)$, which in turn means we can find $x\in F$ with $\p(x)\leq u^*u$.  Then $(sx)^*(sx)=xs^*s\in F$ and so $[s,F]=[sx,F]$.  Also $\p(sx)\leq \p(s)u^*u=u\leq t$.  Thus without loss of generality, we may assume $\p(s)\leq t$.  Therefore, there exists $i$ with $s\leq s_i$.  Then $[s,F]=[s_i,F]$ and hence $[s,F]\in (s_i,D(s_i^*s_i))$.  Thus $\psi([s,f])$ belongs to the right hand side.
\end{proof}
The desired result follows from the claim because the right hand side is compact and hence the left hand side is as well.  But then the left hand side is closed in the compact space $D(e)\times D(f)\times (t,D(t^*t))$.
\end{proof}

Our next lemma establishes that condition (2) of Theorem~\ref{cocyclethm} is always fulfilled in the case of a groupoid morphism induced by an inverse semigroup homomorphism.

\begin{Lemma}\label{transverse}
Suppose that $\p\colon S\to T$ is an inverse semigroup homomorphism such that $\p|_{E(S)}$ is locally coherent.  Again, we use $\p\colon \mathscr G(S)\to \mathscr G(T)$ for the induced morphism.  Let \[\psi\colon \mathscr G(T)\times _{\mathscr G(T)_0}\mathscr G(S)\to \mathscr G(T)\times _{\mathscr G(T)_0}\mathscr G(S)_0\] be given by $(h,g)\mapsto (h\p(g),d(g))$.  Then $\psi$ is open.
\end{Lemma}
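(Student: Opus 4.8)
The plan is to prove openness by showing that $\psi$ carries a basis of open sets of its domain to open sets. Since $\mathscr G(S)$ and $\mathscr G(T)$ are \'etale, they have bases of ``slices'' $(s,U)$ and $(t,V)$, and the fibered product $\mathscr G(T)\times_{\mathscr G(T)_0}\mathscr G(S)$ is taken over $d\colon\mathscr G(T)\to\mathscr G(T)_0$ and $r\p\colon\mathscr G(S)\to\mathscr G(T)_0$, so that the product $h\p(g)$ defining $\psi$ is always defined; the codomain $P=\mathscr G(T)\times_{\mathscr G(T)_0}\mathscr G(S)_0$ is the pullback of $d$ and $\wh\p$. A basic open set of the domain thus has the form
\[B=\bigl((t,V)\times(s,U)\bigr)\cap\bigl(\mathscr G(T)\times_{\mathscr G(T)_0}\mathscr G(S)\bigr),\]
consisting of pairs $([t,\psi'],[s,F])$ with $\psi'\in V$, $F\in U\cap D(s^*s)$ and $\psi'=\wh\p(sF)$, the last equation being the fibered-product condition $d([t,\psi'])=r(\p[s,F])$, where I use $r(\p[s,F])=\p(s)\wh\p(F)=\wh\p(sF)$ from the proof of Theorem~\ref{functorialityofG}.

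On $B$ the coordinate $\psi'$ is determined by $F$, and using the germ product together with the formula for the induced morphism I compute
\[\psi\bigl([t,\wh\p(sF)],[s,F]\bigr)=\bigl([t,\wh\p(sF)]\,[\p(s),\wh\p(F)],\,F\bigr)=\bigl([t\p(s),\wh\p(F)],\,F\bigr).\]
Writing $r=t\p(s)$, this gives $\psi(B)=\{([r,\wh\p(F)],F)\mid F\in U'\}$, where $U'=\{F\in U\cap D(s^*s)\mid\wh\p(sF)\in V\}$. I claim $U'$ is open: the map $F\mapsto\wh\p(sF)$ on $D(s^*s)$ factors as the continuous section $F\mapsto[s,F]$ onto the slice $(s,D(s^*s))$, followed by the continuous morphism $\p\colon\mathscr G(S)\to\mathscr G(T)$ of Theorem~\ref{functorialityofG} and the continuous range map $r$; hence $\{F\mid\wh\p(sF)\in V\}$ is open, and $U'$ is an intersection of open sets.

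It remains to recognize $\psi(B)$ as an open subset of $P$. Consider the manifestly open set $O=\bigl((r,D(r^*r))\times U'\bigr)\cap P$, whose elements are the pairs $([r,\chi],F)$ with $\chi\in D(r^*r)$, $F\in U'$ and $\chi=\wh\p(F)$. The crux is to verify that for every $F\in U'$ one automatically has $\wh\p(F)\in D(r^*r)$, i.e.\ $r^*r\in\wh\p(F)$: since $r^*r=\p(s)^*\,t^*t\,\p(s)$ and the defining conditions of $U'$ yield $\wh\p(sF)=\p(s)\wh\p(F)\in V\subseteq D(t^*t)$, we get $t^*t\in\p(s)\wh\p(F)$, equivalently $r^*r\in\wh\p(F)$. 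Consequently the constraint $\chi\in D(r^*r)$ is redundant, so $O=\{([r,\wh\p(F)],F)\mid F\in U'\}=\psi(B)$, proving $\psi(B)$ open. I expect this last verification to be the main obstacle: one must see that passing to the product $t\p(s)$ imposes no range restriction beyond those already built into $U'$, so that the image of a basic open set is again a basic open set of the pullback rather than a proper subset of one. The openness of $U'$, which is exactly where the continuity furnished by local coherence (Theorem~\ref{functorialityofG}) enters, is the other essential ingredient.
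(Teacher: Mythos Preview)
Your proof is correct and follows the same approach as the paper: show that the image of a basic open box $((t,V)\times(s,U))\cap X$ is again a basic open subset of the codomain. The paper writes this image as $((t\p(s),\p(s)^*V)\times U)\cap Y$, absorbing the $V$-constraint into the first factor via the partial homeomorphism $\beta_{\p(s)^*}$ of $\wh{E(T)}$ rather than into a set $U'$ in the second factor as you do; this is a cosmetic difference, though note that the paper's description makes openness of the image immediate without a separate appeal to continuity of $\wh\p$.
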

\begin{proof}
Let $X= \mathscr G(T)\times _{\mathscr G(T)_0}\mathscr G(S)$ and $Y=\mathscr G(T)\times _{\mathscr G(T)_0}\mathscr G(S)_0$.  Then
 a typical element of $X$ is of the form $([t,\p(s)\wh \p(F)],[s,F])$.  It is easy to see that a basic neighborhood of such a point is of the form $W=((t,V)\times (s,U))\cap X$ with $U\subseteq D(s^*s)$ and $V\subseteq D(t^*t)\cap D(\p(ss^*))$.  We claim the image of $W$ is $N=((t\p(s),\p(s)^*V)\times U)\cap Y$ and hence open.

 If $([t,\p(s)\wh \p(F')],[s,F'])\in W$, then its image is $([t\p(s),\wh \p(F')],F')\in N$ since $\p(s)\wh \p(F')\in V$ implies $\wh\p(F')\in \p(s)^*V$.  Conversely, given an element $([t\p(s),\wh\p(F')],F')\in N$, we have $[s,F']\in (s,U)$ and $\p(s)\wh\p(F')\in \p(ss^*)V = V$. Thus $([t,\p(s)\wh\p(F')],[s,F'])\in W$ with image $([t\p(s),\wh\p(F')],F')$.

 This proves that $\psi$ is open.
\end{proof}

A crucial notion from inverse semigroup theory~\cite{Lawson} is that of an idempotent pure homomorphism.  An inverse semigroup homomorphism $\p\colon S\to T$ is said to be \emph{idem\-po\-tent pure} if $\p\inv (E(T))=E(S)$, that is, $\p(s)\in E(T)$ implies $s\in E(S)$.  For instance, an inverse semigroup $S$ is $E$-unitary if and only if the maximal group image homomorphism $\sigma\colon S\to G$ is idempotent pure.  Let us say that $\p\colon S\to T$ is \emph{locally idempotent pure} if $\p|_{eSe}\colon eSe\to T$ is idempotent pure for each $e\in E(S)$.  An inverse semigroup is said to be \emph{locally $E$-unitary} if $eSe$ is $E$-unitary for each $e\in E(S)$.  Clearly, $S$ is locally $E$-unitary if and only if $\sigma\colon S\to G(S)$ is locally idempotent pure.  It turns out that being locally idempotent pure is enough to guarantee that the corresponding morphism of groupoids is faithful, at least if we put aside topological concerns.

\begin{Lemma}\label{faithful}
Let $\p\colon S\to T$ be a locally idempotent pure homomorphism of inverse semigroups such that $\p|_{E(S)}$ is locally coherent.  Denote also by $\p$ the induced morphism $\mathscr G(S)\to \mathscr G(T)$.  Then the map $\psi$ given by \[g\mapsto (r(g),d(g),\p(g))\] is injective.  In particular, if $\p$ satisfies the KS condition, then the induced morphism of groupoids is faithful.
\end{Lemma}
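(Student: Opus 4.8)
The plan is to establish the injectivity of $\psi$ by hand and then read off faithfulness from Lemma~\ref{closed}. Suppose $\psi([s,F])=\psi([s',F'])$. Recalling that $d[s,F]=F$, that $r[s,F]=sF$, and that by Theorem~\ref{functorialityofG} the induced functor sends $[s,F]$ to $[\p(s),\wh\p(F)]$, this single equation unpacks into three statements: $F=F'$, $sF=s'F$, and $[\p(s),\wh\p(F)]=[\p(s'),\wh\p(F)]$. Since two germs with the same domain filter satisfy $[s,F]=[s',F]$ precisely when $sx=s'x$ for some idempotent $x\in F$, everything reduces to producing such an $x$.

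First I would use the equality of images. The relation $[\p(s),\wh\p(F)]=[\p(s'),\wh\p(F)]$ yields $v\le\p(s),\p(s')$ in $T$ with $v^*v\in\wh\p(F)$, and since $\wh\p(F)=\p(F)^{\uparrow}$ there is an idempotent $x_0\in F$ with $\p(x_0)\le v^*v$; meeting with $s^*s\cdot s'^*s'\in F$ I may assume $x_0\le s^*s,s'^*s'$. A direct computation using $v\le\p(s),\p(s')$ then gives $\p(sx_0)=\p(s'x_0)$, and the same holds with $x_0$ replaced by any smaller idempotent of $F$.

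The main point is now to choose $x\in F$ (a shrinking of $x_0$) so that, writing $a=sx$ and $b=s'x$, the element $ab^*$ is idempotent; the elements $a$ and $b$ will then be forced to be equal. The obstacle is that $a$ and $b$ have range idempotents $sxs^*$ and $s'xs'^*$ that a priori lie in no common corner $eSe$, so local idempotent purity cannot be applied to $ab^*$ directly. Here I would invoke the equality of ranges $sF=s'F$: it gives $\eta_0:=ss^*\cdot s's'^*\in sF\cap s'F$, whence $s^*\eta_0 s,\ s'^*\eta_0 s'\in F$. Replacing $x_0$ by the smaller idempotent $x:=x_0\cdot(s^*\eta_0 s)(s'^*\eta_0 s')\in F$, one checks that $sxs^*\le\eta_0$ and $s'xs'^*\le\eta_0$, so the range and domain idempotents of $ab^*=sxs'^*$ both lie below $\eta_0$ and hence $ab^*\in\eta_0 S\eta_0$. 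As $\p(sx)=\p(s'x)$, the image $\p(ab^*)=\p(a)\p(a)^*$ is idempotent, so local idempotent purity of $\p|_{\eta_0 S\eta_0}$ forces $ab^*$ to be idempotent. Since $ab^*$ is idempotent and $a^*a=b^*b=x$, one gets $a=(ab^*)b\le b$ and, symmetrically, $b\le a$, whence $a=b$, i.e.\ $sx=s'x$ with $x\in F$. Thus $[s,F]=[s',F]$ and $\psi$ is injective.

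For the final assertion, if $\p$ satisfies the KS condition then it is in particular locally coherent, so $\p|_{E(S)}$ is locally coherent by Proposition~\ref{locallycoherentrestricts} and the induced functor $\mathscr G(S)\to\mathscr G(T)$ is defined; keeping the standing hypothesis that $\p$ is locally idempotent pure, the map $\psi=\ov\p$ is injective by the above and continuous, while Lemma~\ref{closed} shows it is closed. A continuous closed injection is a homeomorphism onto its image, hence a topological embedding, which is exactly the statement that the induced morphism of groupoids is faithful.
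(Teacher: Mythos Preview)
Your argument is correct and follows essentially the same strategy as the paper's: reduce via the germ equality in $\mathscr G(T)$ to the situation $\p(s)=\p(s')$ (on a shrunk domain), then apply local idempotent purity to an element of the shape $s(\cdots)s'^*$ lying in a suitable corner $eSe$ to produce the common lower bound. The only difference is organizational: you invoke $sF=s'F$ early, shrinking $x$ so that $sxs'^*\in \eta_0 S\eta_0$ with $\eta_0=ss^*\,s's'^*$, whereas the paper works directly in $s's'^*Ss's'^*$ with the element $f=s's'^*\,s\,s'^*$ (no preliminary shrinking needed, since this automatically lies in that corner) and uses $sF=s'F$ only at the end to check that the resulting $u=fs'$ satisfies $u^*u\in F$.
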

\begin{proof}
Suppose that $\psi([s,F]) = \psi([t,F'])$.  Then $F=F'$, $sF=tF$ and $[\p(s),\wh \p(F)]=[\p(t),\wh \p(F)]$.  We need to find $u\in S$ such that $u^*u\in F$ and $u\leq s,t$.  We can find $v\leq \p(s),\p(t)$ with $v^*v\in \wh\p (F)$.  Hence there exists $x\in F$ with $\p(x)\leq v^*v$.  Then $(sx)^*(sx),(tx)^*(tx)\in F$ and so $[s,F]=[sx,F]$ and $[t,F]=[tx,F]$.  Moreover, $\p(sx) =\p(s)v^*v\p(x)=v\p(x)=\p(t)v^*v\p(x)=\p(tx)$.  Thus, replacing $s$ by $sx$ and $t$ by $tx$, we may assume that $\p(s)=\p(t)$.

Let $f=tt^*st^*$.  Then $f\in tt^*Stt^*$ and, because $\p(s)=\p(t)$, we have $\p(f) = \p(tt^*)\p(s)\p(t)^*=\p(tt^*)\in E(T)$.  Thus $f\in E(S)$ since $\p$ is locally idempotent pure and hence $u=ft\leq t$.  But $u=tt^*st^*t\leq s$. It remains to prove $u^*u\in F$.  First observe that $u^*u= t^*ts^*tt^*st^*t= t^*ts^*tt^*s$.   Applying $sF=tF$ and $t^*t\in F$ yields $tt^*\in tF=sF$ and hence $s^*tt^*s\in F$.  Thus $u^*u=t^*ts^*tt^*s\in F$.  This establishes $[s,F]=[t,F]$ and so $\psi$ is injective.

The last statement follows since Lemma~\ref{closed} shows that $\psi$ is a closed mapping and hence a topological embedding, being injective.
\end{proof}

Putting together all these lemmas, we have that if $\p\colon S\to T$ is a locally idempotent pure homomorphism of inverse semigroups satisfying the KS condition, then $\p\colon \mathscr G(S)\to \mathscr G(T)$ is a continuous, faithful, transverse, closed cocycle.  Thus we have, by an application of Corollary~\ref{invcocyclecor}, the following theorem, which is one of the main results of this paper.

\begin{Thm}\label{mainnewstuff}
Let $\p\colon S\to T$ be a locally idempotent pure homomorphism of countable inverse semigroups satisfying the Khoshkam-Skandalis condition that $\p|_{eSf}$ is coherent for all $e,f\in E(S)$.  Then there is a locally compact Hausdorff space $X$ acted on by $T$ such that $\mathscr G(S)$ is Morita equivalent to the germ groupoid $T\ltimes X$.  Consequently,  $C^*(S)$ is strongly Morita equivalent to $C_0(X)\rtimes T$.  Moreover, if $T$ is a group, then $C^*_r(S)$ is strongly Morita equivalent to $C_0(X)\rtimes_r T$.
\end{Thm}

Our first special case generalizes~\cite[Theorem~3.10]{Skandalis} and also covers~\cite[Example~3.12(b)]{Skandalis}, where a certain locally $E$-unitary inverse semigroup that is not $E$-unitary is considered.

\begin{Cor}\label{EunitKS}
Let $S$ be a countable locally $E$-unitary inverse semigroup with maximal group image homomorphism $\sigma\colon S\to G$.  Suppose that, for all $e,f\in E(S)$ and $g\in G$, one has that $eSf\cap \sigma\inv (g)$ is finitely generated as a downset.  Then there is a locally compact Hausdorff space $X$ and an action of $G$ on $X$ so that $C^*(S)$ is strongly Morita equivalent to $C_0(X)\rtimes G$ and $C^*_r(S)$ is strongly Morita equivalent to $C_0(X)\rtimes_r G$.
\end{Cor}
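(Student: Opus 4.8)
The plan is to obtain the corollary as an immediate application of Theorem~\ref{mainnewstuff} to the maximal group image homomorphism $\sigma\colon S\to G$, taking $T=G$. There are exactly two hypotheses to check: that $\sigma$ is locally idempotent pure, and that it satisfies the Khoshkam--Skandalis condition.

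The first is free. The excerpt has already recorded that $S$ is locally $E$-unitary if and only if $\sigma\colon S\to G(S)$ is locally idempotent pure, so the standing assumption that $S$ is locally $E$-unitary furnishes precisely what is needed.

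For the KS condition, the key observation is that the natural partial order on a \emph{group} is trivial: since $1$ is the only idempotent of $G$, the relation $s\leq t$ in $G$ forces $s=t$, so $G$ is an antichain in the Alexandrov topology. Hence the quasi-compact open subsets of $G$, namely the finitely generated downsets, are exactly the finite subsets. To verify that $\sigma|_{eSf}\colon eSf\to G$ is coherent I must therefore show that the preimage of each finite subset $U=\{g_1,\ldots,g_k\}\subseteq G$ is a finitely generated downset of $eSf$. It is a downset because $\sigma|_{eSf}$ is order preserving and every subset of the antichain $G$ is a downset; and since $(\sigma|_{eSf})\inv(U)=\bigcup_{i=1}^k\bigl(eSf\cap\sigma\inv(g_i)\bigr)$, the hypothesis that each $eSf\cap\sigma\inv(g_i)$ is finitely generated as a downset shows that the union of the corresponding finite generating sets generates $(\sigma|_{eSf})\inv(U)$. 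Thus $\sigma|_{eSf}$ is coherent for all $e,f\in E(S)$, which is exactly the KS condition.

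With both hypotheses verified, Theorem~\ref{mainnewstuff} applies with $T=G$: it produces a locally compact Hausdorff space $X$ carrying a $G$-action, with the germ groupoid $T\ltimes X$ being the transformation groupoid $G\ltimes X$, such that $\mathscr G(S)$ is Morita equivalent to $G\ltimes X$ and hence $C^*(S)$ is strongly Morita equivalent to $C_0(X)\rtimes G$. Because $T=G$ is a group, the final clause of that theorem additionally yields that $C^*_r(S)$ is strongly Morita equivalent to $C_0(X)\rtimes_r G$. The only point requiring any thought is the degeneration of coherence into the finite-preimage condition when the target is a group; once this is noticed the verification is routine, all of the substantive groupoid and $C^*$-algebra work having been absorbed into Theorem~\ref{mainnewstuff}.
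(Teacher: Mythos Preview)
Your proposal is correct and is precisely the argument the paper intends: the corollary is stated immediately after Theorem~\ref{mainnewstuff} as a direct special case with $T=G$, and you have correctly supplied the two routine verifications (locally $E$-unitary $\Leftrightarrow$ $\sigma$ locally idempotent pure, and the KS condition collapsing to the stated finiteness hypothesis because the natural partial order on a group is trivial).
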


In the $E$-unitary case, the above condition is equivalent to the enveloping action being Hausdorff by~\cite[Proposition~3.9]{Skandalis}.

Recall that if $S$ is a strongly $0$-$E$-unitary inverse semigroup with universal group partial homomorphism $\theta\colon S\to \mathscr U(S)$, then $S\cong T/I$ where $T=\{(s,\theta(s))\mid s\neq 0\}\cup (\{0\}\times \mathscr U(S))$ and $I= \{0\}\times \mathscr U(S)$.  Moreover, $T$ is $E$-unitary.  It is easy to see that $T$ satisfies the conditions of Corollary~\ref{EunitKS} if and only if $(eSf\setminus \{0\})\cap \theta\inv (g)$ is finitely generated as a downset for all $e,f\in E(S)$ and $g\in \mathscr U(S)$.  Also it is easy to see that if $\gamma\colon \mathscr G\to G$ is a continuous, faithful, transverse, closed cocycle and $X$ is a closed subset of $\mathscr G_0$, then $\gamma\colon \mathscr G|_X\to G$ is also a continuous, faithful, transverse, closed cocycle.  We thus have the following corollary of our previous work.

\begin{Cor}\label{stronglymoritacase}
Let $S$ be a countable strongly $0$-E-unitary inverse semigroup and suppose that the universal group partial homomorphism $\iota\colon S\to \mathscr U(S)$ satisfies $eSf\cap \iota\inv (g)$ is finitely generated as a downset for all $e,f\in E(S)\setminus \{0\}$ and $g\in \mathscr U(S)$.  Then there is an action of $\mathscr U(S)$ on a locally compact Hausdorff space $X$ such that $C^*(S)$ is strongly Morita equivalent to $C_0(X)\rtimes \mathscr U(S)$ and $C^*_r(S)$ is strongly Morita equivalent to $C_0(X)\rtimes_r \mathscr U(S)$.  The corresponding result holds for the tight $C^*$-algebra of $S$ in the sense of Exel~\cite{Exel}.
\end{Cor}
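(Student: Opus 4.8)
The plan is to reduce everything to the $E$-unitary situation already handled by Corollary~\ref{EunitKS}, exploiting the fact that the hypotheses have been arranged precisely so that they transfer along the $E$-unitary cover. Since $\iota\colon S\to \mathscr U(S)$ is only a \emph{partial} homomorphism, one cannot feed $S$ directly into Theorem~\ref{mainnewstuff}; instead I would pass to the $E$-unitary inverse semigroup $T=\{(s,\iota(s))\mid s\neq 0\}\cup(\{0\}\times \mathscr U(S))$ with maximal group image $G=\mathscr U(S)$ and ideal $I=\{0\}\times \mathscr U(S)$, so that $S\cong T/I$ as recalled just before the statement. Now $\sigma\colon T\to G$ is an honest homomorphism, to which the machinery of this section applies, and the strategy is to build the cocycle on $\mathscr G(T)$ and then cut it down to the reduction corresponding to $S$.

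Concretely, I would first observe that $\sigma\colon T\to G$ is idempotent pure, as $T$ is $E$-unitary, and that it satisfies the Khoshkam--Skandalis condition. The latter is exactly the translation noted before the statement: the hypothesis that $eSf\cap \iota\inv(g)$ is finitely generated as a downset for all $e,f\in E(S)\setminus\{0\}$ and $g\in \mathscr U(S)$ is equivalent to $eTf\cap \sigma\inv(g)$ being finitely generated for all $e,f\in E(T)$ and $g\in G$, i.e.\ to $T$ satisfying the hypotheses of Corollary~\ref{EunitKS}. By Lemmas~\ref{closed}, \ref{transverse} and~\ref{faithful}, the induced morphism $\sigma\colon \mathscr G(T)\to G$ is therefore a continuous, faithful, transverse, closed cocycle. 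Next I would use Lemma~\ref{ideal} and Proposition~\ref{reduction} to identify $\mathscr G(S)=\mathscr G(T/I)$ with the reduction $\mathscr G(T)|_{I^\perp}$, where $I^\perp$ is a closed, $G$-invariant subspace of $\mathscr G(T)_0=\wh{E(T)}$. By the remark that restricting a continuous, faithful, transverse, closed cocycle to a closed subspace of the unit space again yields such a cocycle, the restriction $\sigma\colon \mathscr G(S)\to G$ is a continuous, faithful, transverse, closed cocycle. Applying the group case of Corollary~\ref{invcocyclecor} (which also covers the reduced algebra when the target is a group) then produces a locally compact Hausdorff space $X$ carrying an action of $G=\mathscr U(S)$ with $\mathscr G(S)$ Morita equivalent to $G\ltimes X$; since $C^*(\mathscr G(S))\cong C^*(S)$ and $C^*_r(\mathscr G(S))\cong C^*_r(S)$, this gives the asserted strong Morita equivalences of $C^*(S)$ with $C_0(X)\rtimes G$ and of $C^*_r(S)$ with $C_0(X)\rtimes_r G$.

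For the tight $C^*$-algebra I would simply iterate the restriction step once more: the tight spectrum $\wh{E(S)}_{tight}$ is a closed $S$-invariant subspace of $\mathscr G(S)_0$, hence (by the $G$-invariance argument used in proving Theorem~\ref{main2}) also $G$-invariant, and $\mathscr G_{tight}(S)$ is the corresponding reduction. The same remark shows $\sigma\colon \mathscr G_{tight}(S)\to G$ is again a continuous, faithful, transverse, closed cocycle, so Corollary~\ref{invcocyclecor} yields a closed invariant $X_{tight}\subseteq X$ with $C^*(\mathscr G_{tight}(S))$ strongly Morita equivalent to $C_0(X_{tight})\rtimes G$, and likewise for the reduced algebra. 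I do not expect a genuinely hard step here: the whole argument is an assembly of results from Sections~4--6, and the only points demanding care are that $I^\perp$ and $\wh{E(S)}_{tight}$ are closed and $G$-invariant and that closedness of the cocycle descends to these reductions. This last descent is the mildly delicate point, but it follows formally from the fact that intersecting the closed image of the map $g\mapsto(r(g),d(g),\p(g))$ with the closed set carved out by the reduction remains closed.
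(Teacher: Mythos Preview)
Your proposal is correct and follows essentially the same route as the paper: pass to the $E$-unitary cover $T$ with $S\cong T/I$, verify that the hypothesis on $S$ is equivalent to $T$ satisfying the conditions of Corollary~\ref{EunitKS}, and then use that a continuous, faithful, transverse, closed cocycle restricts to one on the reduction to any closed subspace of the unit space (applied first to $I^\perp$ to reach $\mathscr G(S)$, then to the tight spectrum). The only minor overstatement is the phrase ``yields a closed invariant $X_{tight}\subseteq X$'': Corollary~\ref{invcocyclecor} just produces some locally compact Hausdorff $\mathscr U(S)$-space, and while it can be identified with a closed invariant subspace of $X$ via the enveloping-action construction, neither you nor the paper need this identification for the conclusion.
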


One can imitate the argument of~\cite[Proposition~3.9]{Skandalis} to show that the above condition is equivalent to the enveloping action being Hausdorff.

For example, if $|X|\geq 2$ and $P_X$ is the polycyclic inverse monoid, then $\iota\colon P_X\to F_X$ is the map taking $uw^*$ to the reduced form of $uw^{-1}$.  It is clear that the image of $\iota$ consists of all reduced words $v$ that can be written as a positive word multiplied by the inverse of a positive word.  The unique maximal element in $\iota\inv(v)$ is $v$ itself.  It follows immediately that Corollary~\ref{stronglymoritacase} applies to $P_X$.  More generally, call an inverse semigroup $S$ with zero \emph{strongly $0$-$F$-inverse} if $eSf\cap \iota\inv (g)$ has a maximum element, when not empty, for all $g\in \mathscr U(S)$. For an inverse monoid, this is equivalent to asking that $\iota\inv (g)$ have a maximum element whenever it is not empty. For instance, $P_X$ is strongly $0$-$F$-inverse, as are the graph inverse semigroups of~\cite{Paterson}.

\begin{Cor}
Let $S$ be a countable strongly $0$-$F$-inverse semigroup with universal group $\mathscr U(S)$.   Then there is an action of $\mathscr U(S)$ on a locally compact Hausdorff space $X$ such that $C^*(S)$ is strongly Morita equivalent to $C_0(X)\rtimes \mathscr U(S)$ and $C^*_r(S)$ is strongly Morita equivalent to $C_0(X)\rtimes_r \mathscr U(S)$.  The corresponding result holds for the tight $C^*$-algebra of $S$ in the sense of Exel~\cite{Exel}.
\end{Cor}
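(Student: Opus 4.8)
The plan is to derive this corollary directly from Corollary~\ref{stronglymoritacase}, whose hypotheses are (i) that $S$ be strongly $0$-$E$-unitary and (ii) that $eSf\cap \iota\inv(g)$ be finitely generated as a downset for all $e,f\in E(S)\setminus\{0\}$ and all $g\in \mathscr U(S)$. Once both are verified, that corollary produces the action of $\mathscr U(S)$ on a locally compact Hausdorff space $X$ together with the asserted strong Morita equivalences of $C^*(S)$ and $C^*_r(S)$ with $C_0(X)\rtimes \mathscr U(S)$ and $C_0(X)\rtimes_r\mathscr U(S)$; since the statement about the tight $C^*$-algebra is already part of the conclusion of Corollary~\ref{stronglymoritacase}, nothing further is needed there.

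Condition (ii) is the routine half. I would first note that $eSf=\{x\in S\mid xx^*\leq e,\ x^*x\leq f\}$ is a downset, and that $\iota\inv(g)$ is downward closed away from $0$: if $0\neq x\leq y$ then $x=yx^*x$ with $x^*x$ a nonzero idempotent, so $\iota(x)=\iota(y)$. Hence $eSf\cap \iota\inv(g)$ is downward closed in $S\setminus\{0\}$, and the strongly $0$-$F$-inverse hypothesis says it is empty or has a maximum $m$. Combining these, it equals $m^{\downarrow}\setminus\{0\}$, a principal---hence finitely generated---downset, giving (ii).

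The main obstacle is condition (i): that $\iota\colon S\to\mathscr U(S)$ is idempotent pure, which is the zero analogue of the classical fact that every $F$-inverse monoid is $E$-unitary. Suppose $\iota(s)=1$; the goal is $s\in E(S)$. Setting $e=ss^*$ and $f=s^*s$, one has $s\in eSf\cap \iota\inv(1)$, and since $x\leq y$ forces $xx^*\leq yy^*$, the maximum of $eSf\cap\iota\inv(1)$ is $\mathcal H$-related to $s$ and lies above it, so it is $s$ itself. The crux is then that this maximal element must be idempotent. In the monoid case this is clean: the top $m$ of $\iota\inv(1)$ dominates the identity, whence $mm^*=m^*m=1$ exhibits $m$ as a unit, and $m^2\leq m$ combined with cancellation by the unit $m$ yields $m=m^2$, forcing $m=1$. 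I would localize this argument to the monoids $eSe$, where it applies verbatim, and then treat the general sandwich $eSf$ by regarding the maximal element as a maximal partial unit from $f$ to $e$ and cancelling in the associated local groupoid; this last adaptation, where $e$ and $f$ may be incomparable, is the delicate point. With idempotent purity established, $S$ is a Rees quotient of its $E$-unitary cover with maximal group image $\mathscr U(S)$, both hypotheses of Corollary~\ref{stronglymoritacase} hold, and the corollary follows.
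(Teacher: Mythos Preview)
Your overall plan---derive the corollary from Corollary~\ref{stronglymoritacase}---is exactly what the paper intends; the paper states the corollary with no proof, immediately after introducing strongly $0$-$F$-inverse semigroups, so the passage from the definition to the conclusion is meant to be one line. Your verification of condition (ii) is correct and is the entire substance of that one line: a nonempty set with a maximum is a principal, hence finitely generated, downset.

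Where you diverge from the paper is in treating condition (i) as something to be proved. The notion of strongly $0$-$F$-inverse is introduced inside a passage devoted entirely to strongly $0$-$E$-unitary inverse semigroups, and the prefix ``strongly'' in both terms refers to the universal partial homomorphism $\iota$; the paper is evidently reading strongly $0$-$F$-inverse as a subclass of strongly $0$-$E$-unitary (the max condition being an \emph{additional} hypothesis on a semigroup already assumed to have $\iota$ idempotent pure). Under that reading condition (i) is part of the hypothesis, and the corollary really is immediate.

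Your attempt to \emph{deduce} idempotent purity from the max condition alone is a defensible reading of the bare definition, and your monoid argument is correct (it is the zero analogue of the classical fact that $F$-inverse monoids are $E$-unitary). But you yourself flag the non-monoid case as ``the delicate point'' and do not complete it; the obstacle is genuine, since when $e=ss^*$ and $f=s^*s$ satisfy $ef=0$ one has $s^2=0$, so the squaring-and-cancelling trick is unavailable and your ``local groupoid'' sketch has nothing to grip. So as written your proposal does not establish (i). Either supply the missing argument for incomparable $e,f$, or---matching the paper---simply take strongly $0$-$E$-unitary as part of the standing hypothesis and let (ii) carry the proof.
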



\end{document}